\documentclass[11pt]{amsart}
\usepackage{amssymb,amsfonts,graphicx,color,enumerate,calc}
\usepackage[lmargin=30mm,rmargin=30mm,tmargin=30mm,bmargin=30mm]{geometry}
\usepackage{float}
\renewcommand{\baselinestretch}{1.1}
\setlength{\footnotesep}{\baselinestretch\footnotesep}
\usepackage[numbers,sort&compress]{natbib}

\usepackage{mdwlist}
\usepackage[colorlinks=true,citecolor=black,linkcolor=black,urlcolor=blue]{hyperref}
\newcommand{\arXiv}[1]{arXiv:\,\href{http://arxiv.org/abs/#1}{#1}}

\theoremstyle{plain} 
\newtheorem{theorem}{Theorem}
\newtheorem{lemma}[theorem]{Lemma}
\newtheorem{corollary}[theorem]{Corollary}

\theoremstyle{definition} 
\newtheorem{conjecture}[theorem]{Conjecture}
\newcommand{\ceil}[1]{\ensuremath{\protect\lceil#1\rceil}}

\newcommand{\floor}[1]{\ensuremath{\protect\lfloor#1\rfloor}}

\newcommand{\PP}{\ensuremath{\mathcal{P}}}
\newcommand{\BB}{\ensuremath{\mathcal{B}}}
\newcommand{\had}{\textsf{\textup{had}}}
\newcommand{\glm}{\textsf{\textup{glm}}}
\newcommand{\tw}{\textsf{\textup{tw}}}

\newcommand{\bn}{\textsf{\textup{bn}}}
\newcommand{\tn}{\textsf{\textup{tn}}}
\newcommand{\bw}{\textsf{\textup{bw}}}
\newcommand{\sep}{\textsf{\textup{sep}}}
\newcommand{\sn}{\textsf{\textup{sn}}}
\newcommand{\ltp}{\textsf{\textup{ltp}}}
\newcommand{\ctp}{\textsf{\textup{ctp}}}
\newcommand{\link}{\textsf{\textup{link}}}
\newcommand{\wl}{\textsf{\textup{wl}}}

\sloppy

\begin{document} 

\title{Parameters Tied to Treewidth}
\author{Daniel J. Harvey}
\address{\newline School of Mathematical Sciences
\newline Monash University
\newline Melbourne, Australia}
\email{daniel.harvey@monash.edu}
\author{David~R.~Wood}
\address{\newline School of Mathematical Sciences
\newline Monash University
\newline Melbourne, Australia}
\email{david.wood@monash.edu}
\date{\today}
\thanks{Research of D.R.W. is supported by the Australian Research Council.}
\thanks{Research of D.J.H. is supported by an Australian Postgraduate Award.}
\subjclass{graph minors 05C83}

\begin{abstract}
Treewidth is a graph parameter of fundamental importance to algorithmic and structural graph theory. This paper surveys several graph parameters tied to treewidth, including separation number, tangle number, well-linked number and Cartesian tree product number. We review many results in the literature showing these parameters are tied to treewidth. In a number of cases we also improve known bounds, provide simpler proofs and show that the inequalities presented are tight.
\end{abstract}

\maketitle

\section{Introduction}
Treewidth is an important graph parameter for two key reasons. Firstly, treewidth has many algorithmic applications; for example, there are many results showing that NP-hard problems can be solved in polynomial time on classes of graphs with bounded treewidth (see \citet{Bodlaender-AC93} for a survey). Treewidth is inherently related to graph separators, which are ``small" sets of vertices whose removal leaves no component with more than half the vertices (or thereabouts). Separators are particularly useful when using dynamic programming to solve graph problems; find and delete a separator, recursively solve the problem on the remaining components, and then combine these solutions to obtain a solution for the original problem.

Secondly, treewidth is a key parameter in graph structure theory, especially in Robertson and Seymour's seminal series of papers on graph minors \citep{RS-GraphMinors}. Ultimately, the purpose of these papers was to prove what it now known as the Graph Minor Theorem (often referred to as Wagner's Conjecture), which states that any class of minor-closed graphs (other than the class of all graphs) has a finite set of forbidden minors. In order to prove this, Robertson and Seymour separately considered classes with bounded treewidth and classes with unbounded treewidth. The Graph Minor Theorem is (comparatively) easy to prove for classes with bounded treewidth \cite{RS-GraphMinorsIV-JCTB90}. In order to prove the Graph Minor Theorem for classes with unbounded treewidth, Robertson and Seymour showed that graphs with large treewidth contain large grid minors. This Grid Minor Theorem has been reproved by many researchers; we will discuss it more thoroughly in Section~\ref{section:gridminors}. In proving these results, the parameters \emph{linkedness} and \emph{well-linked number} were used. At the heart of the Graph Minor Theorem is the Graph Minor Structure Theorem, which describes how to construct a graph in a minor-closed class; see \citet{KM-GC07} for a survey of several versions of the Graph Minor Structure Theorem. The most complex version, and the one used in the proof of the Graph Minor Theorem, describes the structure of graphs in a minor-closed class with unbounded treewidth in terms of \emph{tangles}. Robertson and Seymour combined all these ingredients in their proof of the Graph Minor Theorem. 

The purpose of this paper is to survey a number of known graph parameters that are closely related to treewidth, including those mentioned above such as separation number, linkedness, well-linked number and tangle number. 

Formally, a \emph{graph parameter} is a real-valued function $\alpha$ defined on all graphs such that $\alpha(G_1)=\alpha(G_2)$ whenever $G_1$ and $G_2$ are isomorphic. Two graph parameters $\alpha(G)$ and $\beta(G)$ are \emph{tied}\footnotemark[2]\footnotetext[2]{Occasionally, other authors use the term \emph{comparable} \citep{Fox11}} if there exists a function $f$ such that for every graph $G$, $$\alpha(G) \leq f(\beta(G)) \text{ and } \beta(G) \leq f(\alpha(G)).$$ Moreover, say that $\alpha$ and $\beta$ are \emph{polynomially tied} if $f$ is a polynomial.

We survey results from the literature that together prove the following theorem:
\begin{theorem}
\label{theorem:pttt}
  The following graph parameters are polynomially tied:
  \begin{itemize*}
  \item treewidth,
  \item bramble number,
  \item minimum integer $k$ such that $G$ is a spanning subgraph of a $k$-tree,
  \item minimum integer $k$ such that $G$ is a spanning subgraph of a chordal graph with no $(k+2)$-clique,
  \item separation number,
  \item branchwidth,
  \item tangle number,
  \item lexicographic tree product number,
  \item Cartesian tree product number,
  \item linkedness,
  \item well-linked number,
  \item maximum order of a grid minor,
  \item maximum order of a grid-like-minor,
  \item Hadwiger number of the Cartesian product $G\square K_2$ (viewed as a function of $G$),
  \item fractional Hadwiger number,
  \item $r$-integral Hadwiger number for each $r\geq 2$.
  \end{itemize*}
\end{theorem}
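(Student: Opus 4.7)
The plan is to anchor all sixteen parameters to treewidth: for each parameter $\alpha$ in the list I would prove polynomial inequalities of the form $\alpha(G) \le f_1(\tw(G))$ and $\tw(G) \le f_2(\alpha(G))$. Since being polynomially tied is transitive (the composition of polynomials is a polynomial), this establishes pairwise ties among all parameters without having to compare them two at a time. The body of the paper would therefore be organised as a sequence of short sections, one per parameter, each establishing its two polynomial inequalities against $\tw$.

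For the classical decomposition-style parameters --- spanning subgraph of a $k$-tree, spanning subgraph of a chordal graph with clique number at most $k+1$, branchwidth, bramble number, tangle number and separation number --- the ties are in fact \emph{linear} and essentially classical: treewidth equals one less than the minimum clique number of a chordal completion, $\bw(G) \le \tw(G)+1 \le \tfrac{3}{2}\bw(G)$ by Robertson--Seymour, bramble number equals $\tw(G)+1$ by the Seymour--Thomas duality, tangle number is tied to branchwidth via tangle--branch duality, and separation number satisfies $\sep(G) \le \tw(G)+1 \le O(\sep(G))$. I would state each equivalence as a self-contained lemma, either citing the original proof or substituting a cleaner argument where one is available, consistently with the promise of the abstract.

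The genuinely hard part concerns the minor-based parameters: maximum grid-minor order, maximum grid-like-minor order, $\had(G\CartProd K_2)$, and the fractional and $r$-integral Hadwiger numbers. The \emph{easy} direction is uniform across them: the Kostochka--Thomason bound $\had(G) = O(\tw(G)\sqrt{\log \tw(G)})$ plus basic counting in a tree decomposition bounds all of these Hadwiger-type and grid-type quantities polynomially in $\tw(G)$, and the fractional and integral variants inherit the same upper bound. The hard direction requires a \emph{polynomial} Excluded Grid Theorem: some polynomial $p$ such that $\tw(G) \ge p(k)$ forces a $k\times k$ grid minor. This is the single deepest ingredient of the whole paper, and the main obstacle; rather than reprove it I would quote the polynomial grid theorem of Chekuri--Chuzhoy. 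Once that is in hand, large grid-like minors follow from Reed--Wood, and large (fractional, integral, or $G\CartProd K_2$-based) $K_t$-minors can be exhibited directly inside a sufficiently large grid minor.

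Finally, for linkedness, well-linked number, and the lexicographic and cartesian tree product numbers, the ties follow by more direct arguments that I would prove from scratch. The well-linked number sits between $\Omega(\tw/\!\log \tw)$ and $O(\tw)$ via a standard separator-plus-averaging argument; linkedness is controlled analogously; and each tree product parameter admits a nearly linear tie to $\tw$ through an explicit tree-decomposition construction in one direction and an easy lower bound in the other. With the polynomial Excluded Grid Theorem imported as a black box, every remaining inequality in the statement is moderate work once the standard treewidth dictionary has been set up, so the organisational challenge is primarily one of careful accounting of the polynomial exponents across the chain of ties.
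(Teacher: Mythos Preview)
Your overall architecture---anchor everything to treewidth and use transitivity---is exactly what the paper does, and your treatment of the classical decomposition parameters (bramble number, $k$-tree, chordal completion, branchwidth, tangle number, separation number, tree product numbers) matches the paper's linear ties almost section for section. Two corrections and one organisational difference are worth flagging.

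First, your invocation of Kostochka--Thomason for the upper bounds on the Hadwiger-type parameters is misplaced: Kostochka--Thomason lower-bounds $\had$ in terms of average degree, it does not upper-bound it in terms of treewidth. The actual upper bounds are elementary and linear: $\had_f(G)\le\bn(G)$ by a direct weight-counting argument on a hitting set, and $\had(G\square K_2)\le 2\,\had_f(G)$ by projecting branch sets into one copy of $G$. No extremal result is needed here.

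Second, your claim that the well-linked number sits at $\Omega(\tw/\log\tw)$ undersells what is true: the paper obtains the linear sandwich $\bn(G)\le\wl(G)\le 3\,\link(G)\le 3\,\bn(G)$ by a direct Menger-based argument on a minimum hitting set of an optimal bramble. Linkedness is likewise linearly tied via $\link(G)\le\bn(G)\le 2\,\link(G)$.

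The genuine organisational difference is in how you handle the hard direction for $\glm$, $\had(G\square K_2)$, $\had_f$ and $\had_r$. You route all of these through the polynomial Excluded Grid Theorem of Chekuri--Chuzhoy. The paper instead uses Reed--Wood's earlier and more direct bound $\tw(G)\le c\,\glm(G)^4\sqrt{\log\glm(G)}$, together with the easy chain $\glm(G)\le\had(G\square K_2)\le 2\,\had_r(G)\le 2\,\had_f(G)\le 2\,\bn(G)$; consequently Chekuri--Chuzhoy is invoked \emph{only} for the grid-minor parameter itself. Your route works too, but leans on a substantially harder black box than necessary for four of the five minor-based parameters.
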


\citet{Fox11} states (without proof) a theorem similar to Theorem~\ref{theorem:pttt} with the parameters treewidth, bramble number, separation number, maximum order of a grid minor, fractional Hadwiger number, and $r$-integral Hadwiger number for each $r\geq 2$. Indeed, this statement of Fox motivated the present paper.

This paper surveys the parameters in Theorem~\ref{theorem:pttt}, showing where these parameters have been useful, and provides proofs that each parameter is tied to treewidth (except in a few cases). In a number of cases we improve known bounds, provide simpler proofs and show that the inequalities presented are tight. The following graph is a key example. Say $n,k$ are integers. Let $\psi_{n,k}$ be the graph with vertex set $A \cup B$, where $A$ is a clique on $n$ vertices, $B$ is an independent set on $kn$ vertices, and $A \cap B = \emptyset$, such that each vertex of $A$ is adjacent to exactly $k(n-1)$ vertices of $B$ and each vertex of $B$ is adjacent to  exactly $n-1$ vertices of $A$. (Note it always possible to add edges in this fashion; pair up each vertex in $A$ with $k$ vertices in $B$ such that all pairs are disjoint, and then add all edges from $A$ to $B$ except those between paired vertices.)

\begin{figure}[H]
\centering
\includegraphics{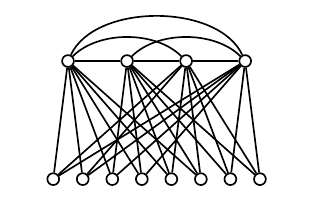}
\caption{The graph $\psi_{4,2}$.}
\end{figure}

\section{Treewidth and Basics}
\label{section:ptttbasics}
Let $G$ be a graph.  
A \emph{tree decomposition} of $G$ is a pair $(T, (B_x \subseteq V(G))_{x \in V(T)})$ consisting of 
\begin{itemize}
\item a tree $T$,
\item a collection of \emph{bags} $B_x$ containing vertices of $G$, indexed by the nodes of $T$.
\end{itemize}
The following conditions must also hold:
\begin{itemize}
\item For all $v \in V(G)$, the set $\{ x \in V(T) : v \in B_x\}$ induces a non-empty subtree of $T$.
\item For all $vw \in E(G)$, there is some bag $B_x$ containing both $v$ and $w$.
\end{itemize}
The \emph{width} of a tree decomposition is defined as the size of the largest bag minus 1. The treewidth $\tw(G)$ is the minimum width over all tree decompositions of $G$. Often, for the sake of simplicity, we will refer to a tree decomposition simply as $T$, leaving the set of bags implied whenever this is unambiguous. For similar reasons, often we say that bags $X$ and $Y$ are adjacent (or we refer to an edge $XY$), instead of the more accurate statement that the nodes of $T$ indexing $X$ and $Y$ are adjacent.

Treewidth was defined by \citet{Halin76} (in an equivalent form which Halin called $S$-functions) and independently by \citet{RS-GraphMinorsII-JAlg86}. Intuitively, a graph with low treewidth is simple and treelike---note that a tree itself has treewidth 1. (In fact, ensuring this fact is the reason that 1 was subtracted in the definition of width.) On the other hand, a complete graph $K_{n}$ has treewidth $n-1$.  
Say a tree decomposition is \emph{normalised} if each bag has the same size and $|X\backslash Y|=|Y\backslash X|=1$ whenever $XY$ is an edge.
\begin{lemma}
\label{lemma:normal}
If a graph $G$ has a tree decomposition of width $k$, then $G$ has a normalised tree decomposition of width $k$.
\end{lemma}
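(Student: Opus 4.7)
The plan is to perform two successive modifications on the given width-$k$ tree decomposition: first pad every bag to size exactly $k+1$, and then subdivide edges of the decomposition tree so that adjacent bags differ by exactly one vertex on each side.

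For the padding stage, apply the following two local operations exhaustively, in any order. \emph{Operation (i):} if some bag $Y$ is contained in an adjacent bag $X$, delete $Y$ from the tree $T$ and reattach every former neighbor of $Y$ as a neighbor of $X$. \emph{Operation (ii):} if some bag $X$ satisfies $|X| < k+1$ and has a neighbor $Y$ with $Y \setminus X \neq \emptyset$, add any vertex of $Y \setminus X$ to $X$. Both operations preserve the subtree and edge-coverage conditions: for (i) because every vertex of $Y$ already lies in $X$, so any edge covered by $Y$ is still covered by $X$ and every vertex's subtree remains connected after contracting $Y$ into $X$; for (ii) because the added vertex $y$ already lies in the adjacent bag $Y$, so extending its subtree to include $X$ keeps it connected. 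Neither move increases any bag's size above $k+1$. The process terminates because the lexicographic potential $\bigl(|V(T)|,\, -\sum_{x \in V(T)} |B_x|\bigr)$ strictly decreases at every operation and is bounded below. Suppose at termination some bag $X$ has $|X| < k+1$. Failure of (ii) forces every neighbor of $X$ to be contained in $X$; failure of (i) then forces $X$ to have no neighbor at all. Hence $T = \{X\}$, which means $V(G) = X$ and $|V(G)| \leq k$, contradicting that $G$ admits a width-$k$ decomposition (which requires some bag of size $k+1$, and therefore $|V(G)| \geq k+1$).

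For the smoothing stage, take any edge $XY$ of $T$ (so both bags now have size $k+1$) with $m := |X \setminus Y| = |Y \setminus X| \geq 2$, enumerate $X \setminus Y = \{x_1, \dots, x_m\}$ and $Y \setminus X = \{y_1, \dots, y_m\}$, and replace the edge $XY$ with a path $X = W_0, W_1, \dots, W_m = Y$ where $W_i := (X \setminus \{x_1, \dots, x_i\}) \cup \{y_1, \dots, y_i\}$. Every $W_i$ has size $k+1$, and consecutive bags $W_{i-1}, W_i$ differ precisely by the swap $x_i \leftrightarrow y_i$. Subtree connectivity survives this surgery: each $x_j$ occupies the contiguous prefix $W_0, \dots, W_{j-1}$, joined at $W_0 = X$ to its original subtree on the $X$-side of the removed edge; each $y_j$ symmetrically occupies the suffix $W_j, \dots, W_m$, joined at $W_m = Y$; vertices in $X \cap Y$ appear in every $W_i$; and vertices outside $X \cup Y$ are untouched. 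No new edges of $G$ need covering, since no new bag introduces a pair of vertices not already together in $X$ or in $Y$. Performing this subdivision at every edge of $T$ yields the required normalised tree decomposition of width $k$.

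The main obstacle is the padding stage: \textit{a priori}, a deficient bag might have neither a helpful neighbor (to supply a new vertex via (ii)) nor a redundant neighbor (to remove via (i)), leaving one stuck. The crucial observation is that these two failures can occur simultaneously only when the tree decomposition has collapsed to a single bag, which is ruled out by the width-$k$ hypothesis. Once padding is complete, the smoothing stage is a routine mechanical construction.
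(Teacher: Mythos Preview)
Your proof is correct and follows essentially the same two-stage strategy as the paper: pad every bag to size $k+1$, then subdivide edges so adjacent bags differ by exactly one vertex. The only differences are organisational---you absorb the contraction of redundant bags into the padding stage via operation (i) (the paper does it as a final cleanup), and you perform the subdivision along each edge in one batch rather than one swap at a time.
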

\begin{proof}
Let $T$ be a tree decomposition of $G$ with width $k$. Thus $T$ contains a bag of size $k+1$. If some bag of $T$ does not contain $k+1$ vertices, then since $T$ is connected, there exist adjacent bags $X$ and $Y$ such that $|X|=k+1$ and $|Y|<k+1$. Then $X \backslash Y$ is non-empty; take some vertex of $X \backslash Y$ and add it to $Y$. Repeat this process until all bags have size $k+1$.

Now, consider an edge $XY$. Since $|X|=|Y|$, it follows $|X \backslash Y|=|Y \backslash X|$. If $|X \backslash Y| > 1$, then let $v \in X \backslash Y$ and $u \in Y \backslash X$. Subdivide the edge $XY$ of $T$ and call the new bag $Z$. Let $Z = (X \backslash \{v\}) \cup \{u\}$. Now $|X \backslash Z| = 1$ and $|Y \backslash Z|=|Y \backslash X|-1$. Repeat this process until $|X \backslash Y|=|Y \backslash X| \leq 1$ for each pair of adjacent bags. Finally, if $XY$ is an edge and $|X \backslash Y|=0$, then contract the edge $XY$, and let the bag at the contracted node be $X$. Repeat this process so that if $X$ and $Y$ are a pair of adjacent bags, then $|X \backslash Y|=|Y \backslash X|=1$. All of these operations preserve tree decomposition properties and width. Hence this modified $T$ is our desired normalised tree decomposition.
\end{proof}

A \emph{$k$-colouring} of a graph $G$ is a function that assigns one of $k$ colours to each vertex of $G$ such that no pair of adjacent vertices are assigned the same colour. The \emph{chromatic number} $\chi(G)$ is the minimum number $k$ such that $G$ has a $k$-colouring.

A graph $H$ is a \emph{minor} of a graph $G$ if a graph isomorphic to $H$ can be constructed from $G$ by vertex deletion, edge deletion and edge contraction. \emph{Edge contraction} means to take an edge $vw$ and replace $v$ and $w$ with a new vertex $x$ adjacent to all vertices originally adjacent to $v$ or $w$. If $H$ is a minor of $G$, say that $G$ has an $H$-minor.

The \emph{Hadwiger number} $\had(G)$ is the order of the largest complete minor of $G$. The Hadwiger number is most relevant to \emph{Hadwiger's Conjecture} \citep{Hadwiger43}, often considered one of the most important unsolved conjectures in graph theory, which states that $\chi(G) \leq \had(G)$. Hadwiger's Conjecture can be seen as an extension of the Four Colour Theorem, since every planar graph has $\had(G) \leq 4$. While the conjecture remains unsolved in general, it has been proved for $\had(G) \leq 5$ \citep{RST-Comb93}.

Given a graph $H$, an \emph{$H$-model} of $G$ is a set of pairwise vertex-disjoint connected subgraphs of $G$, each called a \emph{branch set}, indexed by the vertices of $H$, such that if $vw \in E(H)$, then there exists an edge between the branch sets indexed by $v$ and $w$. If $G$ contains an $H$-model, then repeatedly contract the edges inside each branch set and delete extra vertices and edges to obtain a copy of $H$. Thus if $G$ contains an $H$-model, then $H$ is a minor of $G$. Similarly, if $H$ is a minor of $G$, ``uncontract" each vertex in the minor to obtain an $H$-model of $G$. Models are helpful when dealing with questions relating to minors, since they describe how the $H$-minor ``sits'' in $G$. 

\section{Brambles}
\label{section:ptttbn}
Two subgraphs $A$ and $B$ of a graph $G$ \emph{touch} if $V(A)\cap V(B)\neq\emptyset$, or some edge of $G$ has one endpoint in $A$ and the other endpoint in $B$. A \emph{bramble} in $G$ is a set of connected subgraphs of $G$ that pairwise touch. A set $S$ of vertices in $G$ is a \emph{hitting set} of a bramble $\BB$ if $S$ intersects every element of $\BB$. The \emph{order} of $\BB$ is the minimum size of a hitting set. The \emph{bramble number} of $G$ is the maximum order of a bramble in $G$. Brambles were first defined by \citet{SeymourThomas-JCTB93}, where they were called \emph{screens of thickness $k$}. Seymour and Thomas proved the following result. 

\begin{theorem}[Treewidth Duality Theorem, \citet{SeymourThomas-JCTB93}]
\label{theorem:twdual}
For every graph $G$, $$\tw(G) = \bn(G) - 1.$$
\end{theorem}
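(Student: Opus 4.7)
\medskip

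\noindent\emph{Proof proposal.} The plan is to prove the two inequalities separately; one is a short Helly-type argument, and the other is the substantive content.

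\medskip

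\noindent\textbf{Easy direction: $\bn(G) \leq \tw(G) + 1$.} Fix an optimal tree decomposition $(T,(B_x)_{x\in V(T)})$ of $G$ of width $\tw(G)$, and let $\BB$ be any bramble. For each $B \in \BB$, let $T_B$ be the subtree of $T$ induced by the set of nodes $x$ with $V(B) \cap B_x \neq \emptyset$. A straightforward check using the tree decomposition axioms, together with connectedness of $B$, shows that $T_B$ really is a (non-empty) subtree. Next, if $B,B' \in \BB$ touch then $T_B$ and $T_{B'}$ share at least one node: if $V(B) \cap V(B') \neq \emptyset$ this is immediate; and if some edge $vw$ joins $B$ to $B'$, then any bag containing both $v$ and $w$ lies in both subtrees. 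So the collection $\{T_B : B \in \BB\}$ is a family of pairwise intersecting subtrees of a tree. By the Helly property for subtrees of a tree, they share a common node $x$, and then $B_x$ hits every element of $\BB$. Hence $\bn(G) \leq \max_x |B_x| = \tw(G) + 1$.

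\medskip

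\noindent\textbf{Hard direction: $\tw(G) \leq \bn(G) - 1$.} This is the main obstacle, and the plan is to prove the contrapositive: if every bramble of $G$ has order at most $k$, then $G$ admits a tree decomposition of width at most $k-1$. I would proceed by induction on $|V(G)|$, the goal being to produce, under the bramble bound, a ``balanced'' separator $S$ with $|S| \leq k$ whose removal splits $G$ into smaller pieces $C_1,\dots,C_m$; one then recursively builds tree decompositions of each $G[S \cup V(C_i)]$, places $S$ in every bag in a shared root, and glues along $S$.

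\medskip

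The crux is extracting such a separator from the bramble hypothesis. The cleanest way I would pursue this is via the equivalence of bramble number with the ``cops and robber'' (evader) game on $G$: a bramble of order $k+1$ corresponds to a winning evasion strategy for a robber against $k$ helicopter cops, and the strategy for $k$ cops (when no such bramble exists) can be read off as a tree decomposition. Concretely, starting from the assumption that no bramble has order $> k$, one defines a partial order on the ``positions'' $(X,C)$ where $X \subseteq V(G)$, $|X| \leq k$, and $C$ is a component of $G - X$; an extremal/minimax argument (the content of the Seymour--Thomas proof) shows that every maximal position has a separator $S \supseteq X$ with $|S| \leq k$ that isolates $C$ into smaller components, and iterating produces the required tree decomposition. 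The genuinely hard step is the minimax/LP-duality argument that converts the non-existence of a large bramble into the existence of such a well-behaved ``haven-free'' assignment; the Helly argument above does not suffice in reverse because the obstruction to small treewidth need not be visible from any single bag, so a global duality is needed.

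\medskip

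Combining the two inequalities yields $\tw(G) = \bn(G) - 1$, as required.
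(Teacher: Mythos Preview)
Your easy direction is exactly what the paper does: for each bramble element $A$ form the subtree $T_A$ of bags meeting $A$, argue that touching implies pairwise intersection of subtrees, apply the Helly property for subtrees of a tree, and conclude that some bag is a hitting set, giving $\bn(G)\leq \tw(G)+1$.

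For the hard direction, the paper does not give a proof at all: it explicitly says ``Here, we present a short proof showing one direction of this result. The other (more difficult) direction can be found in \citep{SeymourThomas-JCTB93}; see \citet{ShortDiestel} for a shorter proof.'' So there is nothing in the paper to compare your sketch against. Your outline (no bramble of order $>k$ $\Rightarrow$ a winning strategy for $k$ cops in the helicopter game $\Rightarrow$ a tree decomposition of width $<k$) is indeed the shape of the Seymour--Thomas argument, and you correctly flag that the substantive work lies in the minimax step converting the absence of a large bramble (equivalently, a haven) into a monotone cop strategy. As written, though, this is an outline rather than a proof; the inductive ``balanced separator then glue'' paragraph does not by itself yield width $k-1$ without the haven/strategy machinery you allude to afterwards, so if you were required to supply a full proof you would still need to carry out that duality argument in detail (or follow the shorter Bellenbaum--Diestel route). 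Relative to the paper's own treatment, however, you have matched it on the part it proves and gone slightly further on the part it only cites.
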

\begin{proof}
Here, we present a short proof showing one direction of this result. The other (more difficult) direction can be found in \citep{SeymourThomas-JCTB93}; see \citet{ShortDiestel} for a shorter proof.
Let $\beta$ be a bramble in $G$ of maximum order, and let $T$ be the underlying tree in a tree decomposition of $G$. For a subgraph $A \in \beta$, let $T_{A}$ be the subgraph of $T$ induced by the nodes of $T$ whose bags contain vertices of $A$. Since $A$ is connected, $T_A$ is also connected. Similarly, if $A,B \in \beta$, then since these subgraphs touch, there is a node of $T$ in both $T_A$ and $T_B$. So the set of subtrees $\{ T_A : A \in \beta \}$ pairwise intersect. By the Helly Property of trees, there is some node $x$ that is in all such $T_A$. The bag indexed by $x$ contains a vertex from each $A \in \beta$, so it is a hitting set of $\beta$. Hence that bag has order at least $\bn(G)$, and so $\tw(G) \geq \bn(G) - 1$.
\end{proof}

Note that Theorem~\ref{theorem:twdual} means that the bramble number is equal to the size of the largest bag in a minimum width tree decomposition. 

Brambles are useful for proving a lower bound on the treewidth of a graph. Given a tree decomposition $T$ for a graph $G$, then $\tw(G)$ is at most the width of $T$. Brambles provide the equivalent functionality for the lower bound---given a valid bramble of a graph $G$, it follows that the bramble number is at least the order of that bramble, giving us a lower bound on the treewidth. (For examples of this, see \citet{bodlaenderbramble}, \citet{lucenabramble} and Lemma~\ref{lemma:gridbramble}.)

\section{\texorpdfstring{$k$}{k}-Trees and Chordal Graphs}
\label{section:ptttchord}
In certain applications, such as graph drawing \citep{DMW05,DiGiacomo} or graph colouring \citep{KP-DM08,Albertson-EJC04}, it often suffices to consider only the edge-maximal graphs of a given family to obtain a result. The language of $k$-trees and chordal graphs provides an elegant description of the edge-maximal graphs with treewidth at most $k$.

A vertex $v$ in a graph $G$ is \emph{$k$-simplicial} if it has degree $k$ and its neighbours induce a clique. A graph $G$ is a \emph{$k$-tree} if either:
\begin{itemize*}
\item $G = K_{k+1}$, or
\item $G$ contains a $k$-simplicial vertex $v$ and $G-v$ is also a $k$-tree.
\end{itemize*}
Note that there is some discrepancy over this definition; certain authors use $K_{k}$ in the base case. This means that $K_{k}$ is a $k$-tree, but creates no other changes.
$k$-trees have a strong tie to treewidth; see Lemma~\ref{lemma:ktree}.

A graph is \emph{chordal} if it contains no induced cycle of length at least four. That is, every cycle that is not a triangle contains a chord. \citet{gavril74} showed that the chordal graphs are exactly the intersection graphs of subtrees of a tree $T$. Construct a tree decomposition with underlying tree $T$ as follows. Think of each $v \in V(G)$ as a subtree of $T$; place $v$ in the bags indexed by the nodes of that subtree. It can easily be seen that this is a tree decomposition of $G$ in which every bag is a clique (that is, every possible edge exists), since should two vertices share a bag, then their subtrees intersect and the vertices are adjacent. It also follows that the graph arising from a tree decomposition with all possible edges (that is, two vertices are adjacent if and only if they share a bag) is a chordal graph. Chordal graphs are therefore interesting by being the edge-maximal graphs for a fixed tree-width. The initial definition of $\tw(G)$ by \citet{Halin76} is that $\tw(G)+1$ is equal to the minimum chromatic number of any chordal graph which contains $G$. This is identical to the second equality below, given that chordal graphs are perfect. 

\begin{lemma}[\citep{Bodlaender-TCS98, RS-GraphMinorsII-JAlg86,Scheffler, jvanleeuwen,rose-ktree,arnborg-ktree}]
\label{lemma:ktree}
For every graph $G$,
\begin{align*}
\tw(G) &= \min\{ k: G \text{ is a spanning subgraph of a $k$-tree }\}. \\
 &= \min\{ k: G \text{ is a spanning subgraph of a chordal graph with no $(k+2)$-clique }\}.
\end{align*}
\end{lemma}

\begin{proof}
For simplicity, let $a(G) = \min\{ k: G$ is a spanning subgraph of a $k$-tree$\}$ and $b(G) = \min\{ k: G$ is a spanning subgraph of a chordal graph with no $(k+2)$-clique$\}$.

First, we show $b(G) \leq a(G)$. \citet{FG65} showed that a graph $H$ is chordal if and only if it has a \emph{perfect elimination ordering}; that is, an ordering of the vertex set such that for each $v \in V(H)$, $v$ and the neighbours of $v$ which are after $v$ in the ordering form a clique. If $H$ is an $a(G)$-tree such that $G$ is a spanning subgraph of $H$, then there is a simple perfect elimination ordering for $H$. (Repeatedly delete the $a(G)$-simplicial vertices to obtain $K_{a(G)+1}$, and consider the order of deletion.) So $H$ is chordal. It is clear that each $v$ has only $a(G)$ neighbours after it in this ordering, so $H$ contains no $(a(G)+2)$-clique. (For any clique, consider the first vertex of the clique in the ordering, and note at most $a(G)$ other vertices are in the clique.) Thus $b(G) \leq a(G)$.

Second, we show $a(G) \leq \tw(G)$. Assume for the sake of a contradiction that $G$ is a vertex-minimal counterexample, and say $G$ has treewidth $k$. It is easy to see $a(G) \leq \tw(G)$ when $G$ is complete, so assume otherwise. Let $T$ be a tree decomposition of $G$ with minimum width. By Lemma~\ref{lemma:normal}, assume $T$ is normalised. Note, since $G$ is not complete, $T$ contains more than one bag. Let $G'$ be the graph created by taking $G$ and adding all edges $vw$, where $v$ and $w$ share some bag of $T$. So $G$ is a spanning subgraph of $G'$ and $T$ is a tree decomposition of $G'$ as well as $G$. By the normalisation, there is a vertex $v \in V(G')$ such that $v$ appears in a leaf bag $B$ of $T$ and nowhere else. Hence $v$ has exactly $k$ neighbours in $G'$, which form a clique since they are all in $B$. Since it is smaller than the minimal counterexample, $a(G'-v) \leq \tw(G'-v) \leq k$. Since $G'-v$ contains a $(k+1)$-clique (consider a bag of $T$ other than $B$), it follows $a(G'-v) \geq k$. Thus $a(G'-v)=k$, and $G'-v$ is a spanning subtree of a $k$-tree $H$. Since $v$ is $k$-simplicial in $G'$, it follows $G'$ (and thus $G$) is a spanning subgraph of a $k$-tree, which contradicts our assumption.

Finally, we show that $\tw(G) \leq b(G)$. The graph $G$ is a spanning subgraph of chordal graph $H$ with no $(b(G)+2)$-clique. There is a tree decomposition of $H$ where every bag is a clique; this means it has width at most $b(G)$. This tree decomposition is also a tree decomposition for $G$, so $\tw(G) \leq b(G)$.

Hence, it follows that $b(G) \leq a(G) \leq \tw(G) \leq b(G)$, which is sufficient to prove our desired result.
\end{proof}

\section{Separators}
\label{section:separators}
For a graph $G$, a set $S \subseteq V(G)$, and some $c \in [\frac{1}{2},1)$, a $(k,S,c)$-\emph{separator} is a set $X \subseteq V(G)$ with $|X| \leq k$, such that each component of $G-X$ contains at most $c|S\backslash X|$ vertices of $S$. Note that a $(k,S,c)$-separator is also a $(k,S,c')$-separator for all $c' \geq c$. Define the \emph{separation number} $\sep_c(G)$ to be the minimum integer $k$ such that there is a $(k,S,c)$-separator for all $S \subseteq V(G)$. We also consider the following variant: a $(k,S,c)^*$-\emph{separator} is a set $X \subseteq V(G)$ with $|X| \leq k$ such that each component of $G-X$ contains at most $c|S|$ vertices of $S$. Define $\sep_c^*(G)$ analogously to $\sep_c(G)$, but with respect to these variant separators. It follows from the definition that $\sep_c^*(G) \leq \sep_c(G)$.

Separators can be seen as a generalisation of the ideas presented in the famous planar separator theorem \citep{LT79}, which essentially states that a planar graph $G$ with $n$ vertices contains a $(O(\sqrt{n}),V(G),\frac{2}{3})^*$-separator. Unfortunately, the precise definition of a separator and the separation number is inconsistent across the literature. The above definition is an attempt to unify the existing definitions. \citet{RS-GraphMinorsII-JAlg86} gave the first lower bound on $\tw(G)$ in terms of separators, though they do not use the term. This definition is equivalent to our standard definition but with $c$ fixed at $\frac{1}{2}$. \citet{marxgrohe}, give the above variant definition, with $c$ fixed at $\frac{1}{2}$, and instead call it a \emph{balanced separator}. \citet{Reed97} defines separators using our standard definition, with $c=\frac{2}{3}$. \citet{Bodlaender-TCS98} defines ``type-1" and ``type-2" separators (see below for an explanation), which have variable proportion (i.e. allow for different values of $c$), but are not defined on sets other than $V(G)$. Sometimes \citep{Fox11,marxgrohe,Bodlaender-TCS98} instead of considering components in $G-X$, separators are defined as partitioning the vertex set of $G-X$ into exactly two parts $A$ and $B$, such that no edge has an endpoint in both parts and $|A \cap S|,|B \cap S| \leq c|S|$. (In fact, \citet{Bodlaender-TCS98} uses both this definition and the standard ``components of $G-X$" definition as the difference between type-1 and type-2 separators.) As long as $c \geq \frac{2}{3}$, this is equivalent to considering the components, since Lemma~\ref{lemma:comptrick} and Corollary~\ref{corollary:comptrick} allow partitioning of the components into parts $A$ and $B$.
However, for lower values of $c$ this no longer holds, for example, if $c=\frac{1}{2}$, it is possible that each component contains exactly $\frac{1}{3}$ of the vertices of $S$, so there is no acceptable partition into $A$ and $B$. As a result, $c=\frac{2}{3}$ and $c=\frac{1}{2}$ are the most ``natural" choices for $c$.

Fortunately, $\sep_c(G),\sep_c^*(G),\sep_{c'}(G)$ and $\sep_{c'}^*(G)$ are all tied for all $c,c' \in [\frac{1}{2},1)$.

\citet{RS-GraphMinorsII-JAlg86} proved that $$\sep_{\frac{1}{2}}(G)\leq \tw(G)+1.$$ (Of course, they did not use our notation.) \citet{RS-GraphMinorsII-JAlg86, RS-GraphMinorsXIII-JCTB95} also proved that \begin{equation}\tw(G)+1\leq 4\,\sep_{\frac{2}{3}}(G) - 2.\label{equation:sep1}\end{equation} (\citet{Reed97,Reedktreefinder} gives a more accessible proof of this upper bound.) \citet{grohebook} proved that \begin{equation}
\tw(G) \leq 3\,\sep_{\frac{1}{2}}^{*}(G)-2.\label{equation:sep2}\end{equation} Lemma~\ref{lemma:sepgeqtw} proves a slightly stronger result that replaces the multiplicative constant ``4" by ``3" in equation \eqref{equation:sep1}, and the multiplicative constant ``3" by ``2" in \eqref{equation:sep2}.

First, we prove a useful lemma for dealing with components of a graph.
\begin{lemma}
\label{lemma:comptrick}
For every graph $G$ and for all sets $X, S \subseteq V(G)$ such that each component of $G-X$ contains at most half the vertices of $S \backslash X$, it is possible to partition the components of $G-X$ into at most three parts such that each part contains at most half the vertices of $S \backslash X$. 
\end{lemma}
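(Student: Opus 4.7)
The plan is to set $n = |S - X|$, label the components of $G - X$ as $C_1, C_2, \ldots, C_k$ and write $s_i = |V(C_i) \cap (S - X)|$, so that $\sum_i s_i = n$ and by hypothesis $s_i \leq n/2$ for every $i$. Without loss of generality I reorder so that $s_1 \geq s_2 \geq \cdots \geq s_k$. The goal is to produce three parts $A$, $B$, $C$ whose $(S-X)$-weights are each at most $n/2$.

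The construction I have in mind is a greedy one. Place $C_1$ by itself into $A$, so $A$ has weight $s_1 \leq n/2$. Then scan $C_2, C_3, \ldots$ in order and keep adding them to $B$ as long as the running weight of $B$ stays $\leq n/2$. As soon as the next component $C_{j+1}$ would push $B$ above $n/2$, dump $C_{j+1}$ and every subsequent component into $C$. By construction $A$ and $B$ satisfy the bound.

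The only real content of the argument is verifying that $C$ also has weight at most $n/2$, and this is where the ordering $s_1 \geq s_{j+1}$ matters. The weight of $C$ equals $n - s_1 - (s_2 + \cdots + s_j)$, so the bound $\mathrm{weight}(C) \leq n/2$ is equivalent to $s_1 + (s_2 + \cdots + s_j) \geq n/2$. If $j = k$ then $C$ is empty and we are done, so assume $C_{j+1}$ exists. The stopping rule for $B$ gives $s_2 + \cdots + s_j + s_{j+1} > n/2$, that is, $s_{j+1} > n/2 - (s_2 + \cdots + s_j)$. Since $s_1 \geq s_{j+1}$, this yields $s_1 + s_2 + \cdots + s_j > n/2$, as required.

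I do not expect any serious obstacle: the key structural input (each component has weight $\leq n/2$) does all the work, together with the fact that the first item placed is the largest. I would present the proof in essentially the form above, taking a little care with degenerate cases ($k \leq 1$, or $n = 0$) which are trivial. This is exactly the kind of partition-into-three-bins argument that justifies why separator formulations with parameter $c = \tfrac{2}{3}$ behave well, and I would flag that link so that the later \corref{comptrick} (used in \secref{separators}) follows as an immediate consequence.
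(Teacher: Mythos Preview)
Your argument is correct, and it is genuinely different from the paper's. The paper starts with each component in its own part and repeatedly merges: as long as at least four parts remain, either two of them carry more than $\tfrac{1}{4}$ of $S-X$ each (in which case the remaining parts together carry less than $\tfrac{1}{2}$ and can be merged into one), or at most one part exceeds $\tfrac{1}{4}$ (so two of the small parts can be merged). Your approach instead orders the components by weight, peels off the largest into $A$, greedily fills $B$, and dumps the rest into $C$; the ordering $s_1 \geq s_{j+1}$ combined with the stopping rule for $B$ is exactly what bounds $C$. Your version is a clean one-pass construction with a single inequality to check, while the paper's version is an iterative reduction with a case split; both are short, and both adapt immediately to the two-part $\tfrac{2}{3}$ variant in \corref{comptrick}.
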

\begin{proof}
If $G-X$ contains at most three components, the claim follows immediately. Hence assume $G-X$ contains at least four components.
Initially, let each part simply contain a single component. Merge parts as long as doing so does not cause the new part to contain more than half the vertices of $S \backslash X$.
Now if two parts contain more than $\frac{1}{4}$ of the vertices of $S \backslash X$ each, then all other parts (of which there must be at least two) contain, in total, less than $\frac{1}{2}$ of the vertices of $S \backslash X$. Then merge all other parts together, leaving the partition with exactly three parts. Alternatively only one part (at most) contains more than $\frac{1}{4}$ of the vertices of $S \backslash X$. So at least three parts contain at most $\frac{1}{4}$ of the vertices of $S \backslash X$, and so merge two of them. This lowers the number of parts in the partition. As long as there are four or more parts, one of these operations can be performed, so repeat until at most three parts remain.
\end{proof}

\begin{corollary}
\label{corollary:comptrick}
For every graph $G$ and for all sets $X, S \subseteq V(G)$ such that each component of $G-X$ contains at most two-thirds of the vertices of $S \backslash X$, it is possible to partition the components of $G-X$ into at most two parts such that each part contains at most two-thirds of the vertices of $S \backslash X$. 
\end{corollary}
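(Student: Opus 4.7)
The plan is to mimic the greedy merging argument used in the proof of \lemref{comptrick}, but now push the counting further to force at most two parts rather than three. First I would start with each component of $G-X$ placed in its own part of the partition. By hypothesis each singleton part contains at most $\frac{2}{3}|S-X|$ vertices of $S-X$, so the invariant ``no part contains more than $\frac{2}{3}|S-X|$ vertices of $S-X$'' holds initially. I would then repeatedly merge any two parts whose combined intersection with $S-X$ is at most $\frac{2}{3}|S-X|$; each such merge preserves the invariant and strictly decreases the number of parts, so the process terminates.

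Next I would argue that the terminal partition has at most two parts. Suppose for a contradiction that at least three parts $P_1, P_2, P_3$ remain. Since no further merge is permitted, every pair satisfies $|P_i \cap (S-X)| + |P_j \cap (S-X)| > \frac{2}{3}|S-X|$ for $i \neq j$. Summing this inequality over the three pairs $\{1,2\}, \{1,3\}, \{2,3\}$ gives
\[ 2\bigl(|P_1 \cap (S-X)| + |P_2 \cap (S-X)| + |P_3 \cap (S-X)|\bigr) > 3 \cdot \tfrac{2}{3}|S-X| = 2|S-X|. \]
Hence the three parts together meet $S-X$ in more than $|S-X|$ vertices. But $P_1, P_2, P_3$ are pairwise disjoint subsets of $V(G)-X$, which contains $S-X$, so their combined intersection with $S-X$ is at most $|S-X|$, a contradiction.

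This shows the terminal partition has at most two parts, and by the maintained invariant no part contains more than $\frac{2}{3}|S-X|$ vertices of $S-X$, as required. The only real obstacle is picking the right stopping rule for the greedy merge; once that is in place, the pigeonhole-style counting over the three pairs is essentially forced, and the bound $\frac{2}{3}$ is exactly what is needed to make the double-counting contradict the total size.
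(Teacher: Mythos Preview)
Your proof is correct and follows essentially the same approach that the paper indicates, namely the greedy merging argument of \lemref{comptrick}. Your double-counting termination check (summing the three pairwise inequalities) is a clean variant of the case analysis used there, and it works perfectly with the threshold $\tfrac{2}{3}$.
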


This corollary follows by a very similar argument to Lemma~\ref{lemma:comptrick}.

The following argument is similar to that provided in \citep{RS-GraphMinorsII-JAlg86}.
\begin{lemma}[\citet{RS-GraphMinorsII-JAlg86}]
\label{lemma:sepleqtw}
For every graph $G$ and for all $c \in [\frac{1}{2},1)$, $$\sep_c(G) \leq \tw(G)+1.$$
\end{lemma}
\begin{proof}
Fix $S \subseteq V(G)$ and let $k := \tw(G)+1$. It is sufficient to construct a $(k,S,\frac{1}{2})$-separator for $G$. The graph $G$ has a normalised tree decomposition $T$ with maximum bag size $k$, by Lemma~\ref{lemma:normal}. Consider a pair of adjacent bags $X,Y$. Let $T_X$ and $T_Y$ be the subtrees of $T-XY$ containing bags $X$ and $Y$ respectively. Let $U_X \subseteq V(G)$ be the set of vertices only appearing in bags of $T_X$, and $U_Y$ the set of vertices only appearing in bags of $T_Y$. Then  $U_X, X \cap Y, U_Y$ is a partition of $V(G)$ such that no edge has an endpoint in $U_X$ and $U_Y$. Each component of $G-(X \cap Y)$ is contained entirely within $U_X$ or $U_Y$. Say $Q \subseteq V(G)$ is \emph{large} if $|Q \cap S| > \frac{1}{2}|S \backslash (X \cap Y)|$.

If neither $U_X$ or $U_Y$ is large, then no component of $G-(X \cap Y)$ is large. Hence $X \cap Y$ is a $(|X \cap Y|, S,\frac{1}{2})$-separator. Since $|X \cap Y| \leq |Y| \leq k$, this is sufficient.

Alternatively, for all edges $XY \in E(T)$, exactly one of $U_X$ and $U_Y$ is large. (If both sets are large, then $|S \backslash (X \cap Y)| = |U_X \cap S| + |U_Y \cap S| > |S \backslash (X \cap Y)|$, which is a contradiction.) Orient the edge $XY \in E(T)$ towards $X$ if $U_X$ is large, or towards $Y$ if $U_Y$ is large.

Now there must be a bag $B$ with outdegree 0. If $B$ is a $(|B|, S,\frac{1}{2})$-separator, then since $|B|=k$, the result is achieved. Otherwise, exactly one component $C$ of $G-B$ is large. The vertices of $C$ only appear in the bags of a single subtree of $T-B$. Label that subtree as $T'$, and let $A$ denote the bag of $T'$ adjacent to $B$. Recall there is a partition $V(G)$ into $U_A, A \cap B, U_B$ where $|U_B \cap S| > \frac{1}{2}|S \backslash (A \cap B)|$, since the edge $AB$ is oriented towards $B$. Hence $|U_A \cap S| < \frac{1}{2}|S \backslash (A \cap B)|$. Also note the vertices of $G-B$ that only appear in the bags of $T'$ are exactly the vertices of $U_A$. Hence $C \subseteq U_A$, and $|U_A \cap S| > \frac{1}{2}|S \backslash B|$.

So $\frac{1}{2}|S \backslash B| < |U_A \cap S| < \frac{1}{2}|S \backslash (A \cap B)|$. By our normalisation, $|A \cap B| = |B|-1$. So $|S \backslash B| \geq |S \backslash (A \cap B)|-1$.
Thus $|S \backslash (A \cap B)|-1 < 2|U_A \cap S| < |S \backslash (A \cap B)|$, which is a contradiction since $|S \backslash (A \cap B)|-1$, $2|U_A \cap S|$ and $|S \backslash (A \cap B)|$ are all integers.
\end{proof}

Now we prove the upper bound.
\begin{lemma}
\label{lemma:sepgeqtw}
For every graph $G$, for all $c \in [\frac{1}{2},1)$, $$\bn(G) \leq \frac{1}{1-c}\sep_c^*(G).$$
\end{lemma}
\begin{proof}
Say $\beta$ is an optimal bramble of $G$ with a minimum hitting set $H$. That is, $|H|=\bn(G)$. For the sake of a contradiction, assume that $(1-c)\bn(G) > \sep_c^*(G)$. So there is a $(\sep_c^*(G),H,c)^*$-separator $X$. If $X$ is a hitting set for $\beta$ then $\bn(G) \leq |X| \leq sep_c^*(G) < (1-c)\bn(G)$, which is a contradiction. So $X$ is not a hitting set for $\beta$. Thus some bramble element of $\beta$ is entirely within a component of $G-X$. Only one such component can contain bramble elements. Call this component $C$. Then we can hit every bramble element of $\beta$ with the vertices of $X$ or the vertices of $H$ inside $C$, that is, $X \cup (H \cap V(C))$ is a hitting set. Since $X$ is a $(\sep_c^*(G),H,c)^*$-separator, $|H \cap V(C)| \leq c|H|$. Thus $|X \cup (H \cap V(C))| = |X| + |H \cap V(C)| \leq |X| + c|H| \leq \sep_c^*(G) + c|H| < (1-c)|H| + c|H| = |H|$. Thus $X \cup (H \cap V(C))$ is a hitting set smaller than the minimum hitting set, a contradiction. 
\end{proof}

Hence, from the above it follows that for $c \in [\frac{1}{2},1)$, $$\sep_c^*(G) \leq \sep_c(G) \leq \tw(G) + 1 = \bn(G) \leq \frac{1}{1-c}\sep_c^*(G) \leq \frac{1}{1-c}\sep_c(G).$$

Each of the above inequalities is tight. In particular, the second and third inequalities are tight for $K_n$. For a given $c \in [\frac{1}{2},1)$, if $k,n$ are integers such that $k > \frac{c}{1-c}+1$ and $n \geq \frac{k-1}{1-c}$,  then $\sep_c^*(\psi_{n,k}) = \sep_c(\psi_{n,k}) = n$. (See \citep{thesis} for a proof of this result.) This proves that the first and last inequalities are tight.

Finally, given a graph $G$, let $\sn(G)$ denote the minimum integer $k$ such that, for each subgraph $H$ of $G$, there exists a $(k,|V(H)|,\frac{2}{3})^*$-separator for $H$. The parameter $\sn(G)$ is equivalent to the definition of separation number given by \citet{Fox11}. This version of the separation number is also tied to treewidth. Given that, for every $S \subseteq V(G)$, every $(k,S,c)^*$-separator in $G$ is also a $(k,S,c)^*$-separator in $G[S]$, it follows that $\sn(G) \leq \sep_{\frac{2}{3}}^{*}(G) \leq \tw(G)+1$. The other direction is due to a recently announced result of \citet{balancedsep}.

\begin{lemma}[\citet{balancedsep}]
For every graph $G$, $$\tw(G) \leq 105\,\sn(G).$$
\end{lemma}

\section{Branchwidth and Tangles}
\label{section:tangles}
A \emph{branch decomposition} of a graph $G$ is a pair $(T, \theta)$ where $T$ is a tree with each node having degree 3 or 1, and $\theta$ is a bijective mapping from the edges of $G$ to the leaves of $T$. A vertex $x$ of $G$ is \emph{across} an edge $e$ of $T$ if there are edges $xy$ and $xz$ of $G$ mapped to leaves in different subtrees of $T-e$. The \emph{order} of an edge $e$ of $T$ is the number of vertices of $G$ across $e$. The \emph{width} of a branch decomposition is the maximum order of an edge. Finally, the \emph{branchwidth} $\bw(G)$ of a graph $G$ is the minimum width over all branch decompositions of $G$. Note that if $|E(G)| \leq 1$, there are no branch decompositions of $G$, in which case we define $\bw(G)=0$. \citet{RS-GraphMinorsX-JCTB91} first defined branchwidth, where it was defined more generally for hypergraphs; here we just consider the case of simple graphs.

Tangles were first defined by \citet{RS-GraphMinorsX-JCTB91}. Their definition is in terms of sets of separations of graphs. (Note, importantly, that a \emph{separation} is not the same as a \emph{separator} as defined in Section~\ref{section:separators}.) We omit their definition and instead present the following, initially given by \citet{Reed97}.

A set $\tau$ of connected subgraphs of a graph $G$ is a \emph{tangle} if for all sets of three subgraphs $A,B,C \in \tau$, there exists either a vertex $v$ of $G$ in $V(A \cap B \cap C)$, or an edge $e$ of $G$ such that each of $A,B$ and $C$ contain at least one endpoint of $e$. Clearly a tangle is also a bramble---this is the main advantage of this definition. The \emph{order} of a tangle is equal to its order when viewed as a bramble. The \emph{tangle number} $\tn(G)$ is the maximum order of a tangle in $G$.

When defined with respect to hypergraphs, treewidth and tangle number are tied to the maximum of branchwidth and the size of the largest edge. So for simple graphs, there are a few exceptional cases when $\bw(G) < 2$, which we shall deal with briefly. If $G$ is connected and $\bw(G) \leq 1$, then $G$ contains at most one vertex with degree greater than 1 (that is, $G$ is a star), and $\bn(G)=\tn(G) \leq 2$. Henceforth, assume $\bw(G) \geq 2$. 

\citet{RS-GraphMinorsX-JCTB91} prove the following relation between tangle number and branchwidth; we omit the proof. Instead we show that $\tn(G),\bw(G),\bn(G)$ and $\tw(G)$ are all tied by small constant factors.
\begin{theorem}[\citet{RS-GraphMinorsX-JCTB91}]
\label{theorem:tangbw}
For a graph $G$, if $\bw(G) \geq 2$, then $$\bw(G) = \tn(G).$$
\end{theorem}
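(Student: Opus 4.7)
The direction $\tn(G) \le \bw(G)$ admits a direct proof by orienting the edges of a branch decomposition. Fix a branch decomposition $(T,\theta)$ of width $w=\bw(G)$ and a tangle $\tau$ of order $\ell$; I would aim to show $\ell \le w$. For each edge $e$ of $T$, let $V_e$ be the set of vertices across $e$, so $|V_e| \le w$. If $\ell > w$ then $V_e$ fails to be a hitting set, and some $A \in \tau$ satisfies $V(A)\cap V_e = \emptyset$. Connectedness of $A$, combined with the fact that no vertex of $A$ has incident $G$-edges mapped to both components of $T-e$, forces every vertex and edge of $A$ to be confined to a single component of $T-e$. Moreover, all tangle elements disjoint from $V_e$ must live in the same component, for otherwise two of them would share no vertex and no incident edge, violating the touching condition of a bramble. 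Orient $e$ toward that component.

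Following the arrows in $T$ produces a sink node $x$ with every incident edge pointing inward. In the generic case $x$ has degree $3$, and the three subtrees of $T-x$ carry tangle elements $A_1,A_2,A_3$ lying in pairwise disjoint parts of $T$. The tangle axiom requires either a vertex $v \in V(A_1)\cap V(A_2)\cap V(A_3)$ or an edge $f$ with one endpoint in each $A_i$; either of these forces some vertex or edge to be simultaneously incident to three pairwise disjoint subtrees of the branch decomposition, which is impossible. The delicate points are the edge case where every sink is a leaf (which one rules out by a short counting argument on in-degrees, using the hypothesis $\bw(G)\ge 2$) and the degenerate case of isolated-vertex bramble elements (which is handled because an isolated vertex has no incident $G$-edges at all, so it cannot participate simultaneously in three subtrees).

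For the reverse inequality $\bw(G) \le \tn(G)$, I would follow the original Robertson-Seymour argument by proving the contrapositive: if $G$ has no tangle of order $k+1$, then $G$ admits a branch decomposition of width at most $k$. The induction locates a ``good'' separation of $G$ into two pieces (whose existence is guaranteed precisely by the absence of a sufficiently large tangle), builds branch decompositions of the two pieces recursively, and glues them along the separation while controlling the width. This is the main obstacle: there is no purely bramble-theoretic shortcut via \thmref{twdual}, since tangles are a proper subclass of brambles and the strict inequality $\tn(G) < \bn(G)$ is possible. For this direction I would therefore invoke the result from \citet{RS-GraphMinorsX-JCTB91} rather than reprove it, exactly as the authors choose to do in the paragraph preceding the theorem statement.
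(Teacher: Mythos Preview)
The paper does not prove this theorem: immediately after stating it, the authors write ``we omit the proof'' and instead establish only that $\tn$, $\bw$, $\bn$, $\tw$ are tied by small constants. So there is nothing to compare against; your proposal already goes further than the paper does for the inequality $\tn(G)\le\bw(G)$, and for the reverse inequality you defer to \citet{RS-GraphMinorsX-JCTB91}, which is precisely what the paper does for the entire theorem.

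Your orientation argument for $\tn(G)\le\bw(G)$ is essentially correct, but two details in the write-up are off. First, at an internal sink $x$ the elements $A_1,A_2,A_3$ do \emph{not} lie in pairwise disjoint subtrees. With your convention (orient $e$ toward the side carrying the unhit tangle elements), each $A_i$ sits on the $x$-side of $e_i$, i.e.\ its incident $G$-edges land in $T_j\cup T_k$ where $\{i,j,k\}=\{1,2,3\}$. The contradiction nevertheless goes through, because the triple intersection $(T_2\cup T_3)\cap(T_1\cup T_3)\cap(T_1\cup T_2)$ is empty: a common vertex $v$ would then be isolated in $G$ (impossible for a tangle of order $\ge 2$), and for a common edge $f$ with $\theta(f)\in T_j$ the element $A_j$ could not contain an endpoint of $f$. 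If you prefer the ``pairwise disjoint'' picture literally, look at a \emph{source} instead; then $A_i$ genuinely sits in $T_i$, but now leaf sources are harmless and you must argue separately that an internal source exists.

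Second, the leaf-sink case is not handled by any counting argument on in-degrees: a cubic tree can certainly be oriented so that every sink is a leaf (take a single degree-$3$ node with all three edges pointing outward). What actually kills this case is that a tangle element confined to a leaf $x$ has all its incident $G$-edges equal to the single edge $\theta^{-1}(x)$, so its vertices lie among the two endpoints of that edge; those two vertices then hit every element of $\tau$, contradicting $\ell\ge 3$ (which follows from $\ell>\bw(G)\ge 2$). This is where the hypothesis $\bw(G)\ge 2$ is genuinely used.
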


\citet{RS-GraphMinorsX-JCTB91} proved that $\bn(G) \leq \frac{3}{2}\tn(G)$. \citet{Reed97} provided a short proof that $\bn(G) \leq 3\,\tn(G)$. Here, we modify Reed's proof to show that $\bn(G) \leq 2\,\tn(G)$.

\begin{lemma}
For every graph $G$,
$$\tn(G)\leq \bn(G)\leq 2\,\tn(G).$$
\end{lemma}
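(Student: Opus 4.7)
The first inequality $\tn(G)\le\bn(G)$ is immediate: the tangle condition (every three chosen subgraphs share a vertex, or meet a common edge) strictly strengthens pairwise touching, so every tangle is automatically a bramble with the same element set and the same order. Maximising over the smaller family of tangles gives the bound.

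For the harder direction $\bn(G)\le 2\tn(G)$, I would start with an optimal bramble $\beta$ of order $k=\bn(G)$, pick a minimum hitting set $H$ with $|H|=k$, and aim to produce a tangle of order at least $\lceil k/2\rceil$. The first step is to test whether $\beta$ itself satisfies the tangle condition: if every triple of $\beta$-elements admits a common vertex or a common-endpoint edge, then $\beta$ is already a tangle and $\tn(G)\ge k$, which is more than enough. Otherwise, fix a \emph{bad triple} $A,B,C\in\beta$: three pairwise-touching subgraphs whose triple vertex-intersection is empty and for which no edge of $G$ has its endpoints distributed among all three. For each pair of the bad triple, record a pairwise touching witness (either a shared vertex or an edge with its endpoints in the two corresponding subgraphs); this gives three witnesses that encode how $A,B,C$ sit inside $G$.

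The next step is the structural analysis of the bad triple, which is where Reed's factor $3$ argument lives and where we sharpen to factor $2$. The three witnesses and the failure of the tangle condition together cut $G$ into a bounded number of regions relative to $A,B,C$, and any hitting set for $\beta$ must hit the $\beta$-elements contained in each region. Reed's argument distributes the hitting set roughly evenly over three such regions, losing a factor of $3$. To tighten this to $2$, I would amalgamate two of the three regions into a single region, using the fact that the pairwise-touching data between those two regions is preserved (only the \emph{triple} touching fails, and only when the third region is also involved). This two-part split then either yields, by restricting $\beta$ to one merged side and closing under the triple-touching condition, a tangle of order $\ge\lceil k/2\rceil$; or it produces a hitting set for $\beta$ of size strictly less than $k$, contradicting minimality of $H$.

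The main obstacle will be the amalgamation step: one must verify that after merging two of the three bad-triple regions, the surviving sub-family of $\beta$ actually satisfies the triple-touching condition rather than only pairwise touching, so that it is an honest tangle. I expect the clean way to handle this is to choose the bad triple to be extremal (for instance, minimising $|V(A)\cup V(B)\cup V(C)|$, or minimising the total number of surviving bad triples), or to iterate the construction until no bad triples remain. The improvement from factor $3$ to factor $2$ comes precisely from the observation that only one of the three pairwise interfaces of the bad triple needs to be genuinely cut, so the tangle we build only has to separate two sides, not three.
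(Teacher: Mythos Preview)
Your first inequality is fine and matches the paper. The second half, however, is not a proof: it is a speculative outline that never actually delivers a tangle of order at least $k/2$. You identify a bad triple $A,B,C$, speak of ``regions'' that the witnesses cut $G$ into, and propose to ``amalgamate two of the three regions'', but none of these objects is defined, and no argument is given that the restricted family satisfies the triple-touching condition or that its order stays above $k/2$. You yourself flag this as ``the main obstacle'' and offer only heuristics (choose an extremal bad triple, or iterate). Fixing one bad triple gives no control over the many other bad triples that may persist, and repeatedly passing to subfamilies can easily drop the bramble order below $k/2$. As written, there is no verifiable step from the existence of a bad triple to the claimed dichotomy (tangle of order $\geq \lceil k/2\rceil$, or a hitting set of size $<k$).

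The paper's argument is entirely different and avoids all of this. It does \emph{not} try to repair $\beta$ into a tangle. Instead it builds a new family: for each $S\subseteq V(G)$ with $|S|<k/2$, the unique component $S'$ of $G-S$ that fully contains some element of $\beta$ (this component exists and is unique because $|S|<k$ and bramble elements touch). Set $\tau=\{S':|S|<k/2\}$. Given $T_1,T_2,T_3\in\tau$ with $T_i=S_i'$, since $|S_1\cup S_2|<k$ some $B_1\in\beta$ avoids $S_1\cup S_2$, so $B_1\subseteq T_1\cap T_2$; likewise some $B_2\subseteq T_2\cap T_3$. Now $B_1$ and $B_2$ touch, giving a vertex in $T_1\cap T_2\cap T_3$ or an edge with endpoints in $T_1\cap T_2$ and $T_2\cap T_3$; either way the tangle condition holds. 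The order of $\tau$ is at least $k/2$ since $S\cap S'=\emptyset$ whenever $|S|<k/2$. This is short and requires no case analysis of bad triples; I recommend you abandon the amalgamation idea and adopt this ``big component'' construction.
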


\begin{proof}

Since every tangle is also a bramble, $\tn(G) \leq \bn(G)$.

To prove that $\bn(G) \leq 2\,\tn(G)$, let $k:=\bn(G)$, and say $\beta$ is a bramble of $G$ of order $k$. Consider a set $S \subseteq V(G)$ with $|S| < k$. If two components of $G-S$ entirely contain a bramble element of $\beta$, then those two bramble elements do not touch. Alternatively, if no component of $G-S$ entirely contains a bramble element, then all bramble elements use a vertex in $S$, and $S$ is a hitting set of smaller order than the minimum hitting set. Thus exactly one component $S'$ of $G-S$ entirely contains a bramble element of $\beta$. Clearly, $V(S') \cap S = \emptyset$.

Define $\tau := \{ S' : S \subseteq V(G), |S| < \frac{k}{2} \}$. To prove that $\tau$ is a tangle, let $T_1, T_2, T_3$ be three elements of $\tau$. Say $T_i = S'_i$ for each $i$. Since $|S_1 \cup S_2| < k$, some bramble element $B_1$ of $\beta$ does not intersect $S_1 \cup S_2$. Similarly, some bramble element $B_2$ does not intersect $S_2 \cup S_3$. Since $B_1$ does not intersect $S_1$, it is entirely within one component of $G-S_1$, that is, $B_1 \subseteq T_1$. Similarly, $B_1 \subseteq T_2$ and $B_2 \subseteq T_2 \cap T_3$. Since $B_1,B_2 \in \beta$, they either share a vertex $v$, or there is an edge $e$ with one endpoint in $B_1$ and the other in $B_2$. In the first case, $v \in V(T_1 \cap T_2 \cap T_3)$. In the second case, one endpoint of $e$ is in $T_1 \cap T_2$, the other in $T_2 \cap T_3$. It follows that $\tau$ is a tangle. The order of $\tau$ is at least $\frac{k}{2}$, since a set $X$ of size less than $\frac{k}{2}$ has a defined $X' \in \tau$, and so $X$ does not intersect all subgraphs of $\tau$. Then $\tn(G) \geq \frac{k}{2}$. 
\end{proof}

We now provide a proof for a direct relationship between branchwidth and treewidth. Note again these proofs are modified versions of those in \citep{RS-GraphMinorsX-JCTB91}. 
\begin{lemma}[\citet{RS-GraphMinorsX-JCTB91}]
\label{lemma:twbranch}
For a graph $G$, if $\bw(G) \geq 2$ then $$\bw(G) \leq \tw(G)+1 \leq \frac{3}{2}\,\bw(G).$$
\end{lemma}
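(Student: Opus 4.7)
The plan is to prove the two inequalities by explicit constructions in both directions, converting decompositions back and forth.

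For the lower bound $\bw(G)\leq \tw(G)+1$, I start with a tree decomposition $(T,(B_x))$ of width $k$. For each edge $uv\in E(G)$ I pick a bag $B_{x(uv)}$ containing both endpoints and attach a new leaf $\ell_{uv}$ to $x(uv)$ labelled with $uv$; I then iteratively prune any remaining unlabelled leaves, suppress degree-2 nodes, and replace each remaining internal node of degree larger than 3 by an arbitrary binary tree on its neighbours. For any edge $f$ of the resulting tree, a vertex $v$ of $G$ is across $f$ only if it has incident labelled leaves on both sides of $f$; by the subtree axiom of $(T,(B_x))$, such a $v$ lies in a bag of $T$ at the original edge (or split node) that gave rise to $f$, so the order of $f$ is at most $k+1$.

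For the upper bound $\tw(G)+1\leq \tfrac{3}{2}\bw(G)$, I reuse the tree $T$ from a branch decomposition $(T,\theta)$ of width $w$. For each leaf $\ell$ with label $uv$ set $B_\ell=\{u,v\}$. For each internal node $x$, with incident edges $e_1,e_2,e_3$ and the corresponding far subtrees $T_1,T_2,T_3$ of $T-x$, let $A_i$ be the set of vertices across $e_i$, and set $B_x=A_1\cup A_2\cup A_3$. Each $v\in B_x$ has incident edges whose leaves lie in at least two of $T_1,T_2,T_3$, so $v$ belongs to at least two of $A_1,A_2,A_3$; summing gives $2|B_x|\leq|A_1|+|A_2|+|A_3|\leq 3w$, whence $|B_x|\leq\lfloor\tfrac{3w}{2}\rfloor$, which bounds the tree decomposition width by $\lfloor\tfrac{3w}{2}\rfloor-1$, yielding the claim.

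To complete this direction I verify the tree decomposition axioms. Edge coverage is immediate from the leaf bags. For the subtree axiom, for each $v\in V(G)$ let $T_v$ be the smallest subtree of $T$ containing every leaf whose label is an edge incident to $v$, and show $v\in B_x$ precisely when $x\in T_v$: leaves in $T_v$ are labelled with edges incident to $v$ by minimality; an internal node $x\in T_v$ has $T_v$-degree at least $2$, so at least two of the three subtrees of $T-x$ contain incident edges of $v$, forcing $v\in B_x$; conversely if $x\notin T_v$ then $T_v$ lies in a single subtree of $T-x$, so $v\in A_i$ for at most one $i$ and $v\notin B_x$. Isolated vertices of $G$ can be placed in singleton bags without affecting the width bound.

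The hard part will be the bookkeeping in both directions: in the first, cleanly controlling which vertices can be across an edge after the pruning, suppression, and high-degree splits; in the second, pinning down the Steiner-tree characterisation of where each vertex appears so that the subtree axiom is actually satisfied by the bags $A_1\cup A_2\cup A_3$.
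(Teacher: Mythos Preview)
Your proposal is correct and follows essentially the same approach as the paper: the same branch-to-tree construction with $B_x=A_1\cup A_2\cup A_3$ and the double-counting $2|B_x|\le |A_1|+|A_2|+|A_3|\le 3w$, and the same tree-to-branch construction of attaching a leaf per edge and then reducing to a cubic tree. Your Steiner-tree verification of the subtree axiom and your prune/suppress treatment of degree-$1$ and degree-$2$ nodes are minor presentational variants of what the paper does via a path argument and a node-minimality assumption, but the substance is identical.
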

\begin{proof}
We prove the second inequality first. Assume no vertex is isolated. Let $k:= \bw(G)$, and let $(T,\theta)$ be a branch decomposition of order $k$. We construct a tree decomposition with $T$ as the underlying tree, and where $B_x$ will denote the bag indexed by each node $x$ of $T$. A node $x$ in $T$ has degree 3 or 1.  If $x$ has degree 1, then let $B_{x}$ contain the two endpoints of $e = \theta^{-1}(x)$. If $x$ has degree 3, then let $B_{x}$ be the set of vertices that are across at least one edge incident to $x$. We now show that this is a tree decomposition. Every vertex appears at least once in the tree decomposition. Also, for every edge $vw \in E(G)$, the bag of the leaf node $\theta(vw)$ contains both $v$ and $w$. If we consider vertex $v \in V(G)$ incident with $vw$ and $vu$, then $v$ is across every edge in $T$ on the path from $\theta(vw)$ to $\theta(vu)$. Thus, $v$ is in every bag indexed by a node on that path. Such a path exists for all neighbours $w,u$ of $v$. It follows that the subtree of nodes indexing bags containing $v$ form a subtree of $T$. Thus $(T, (B_x)_{x \in V(T)})$ is a tree decomposition of $G$. A bag indexed by a leaf node has size 2. If $x$ is not a leaf, then $B_x$ contains the vertices that are across at least one edge incident to $x$. Suppose $v$ is across exactly one such edge $e$. Then there exists $\theta(vw)$ and $\theta(vu)$ in different subtrees of $T-e$. Without loss of generality, $\theta(vw)$ is in the subtree containing $x$. But then the path from $x$ to $\theta(vw)$ uses one of the other two edges incident to $x$. Hence if $v$ is in $B_{x}$ then $v$ is across at least two edges incident to $x$. If the sets of vertices across the three edges incident to $x$ are $A,B$ and $C$ respectively, then $|A| + |B| + |C| \geq 2|B_{x}|$. But $|A|+|B|+|C| \leq 3k$. Therefore, regardless of whether $x$ is a leaf, $|B_{x}| \leq \max\{2,\frac{3}{2}k\} =\frac{3}{2}k$ (since $k \geq 2$). Therefore $\tw(G)+1 \leq \frac{3}{2}k$.

Now we prove the first inequality. Let $k:=\tw(G)+1$. Hence there exists a tree decomposition $(T, (B_x)_{x \in V(T)})$ with maximum bag size $k$; choose this tree decomposition such that $T$ is node-minimal, and such that the subtree induced by $\{ x \in V(T) : v \in B_x\}$ is also node-minimal for each $v \in V(G)$. If $k < 2$, then $G$ contains no edge, and $\bw(G) = 0$. Now assume $k \geq 2$ and $E(G) \neq \emptyset$. Since the first inequality is trivial when $G$ is complete, we assume otherwise, and thus $T$ is not a single node. 

Note the following facts: if $x$ is a node of $T$ with degree 2, then there exists some pair of adjacent vertices $v,w$ such that $B_x$ is the only bag containing $v$ and $w$. (Otherwise, $T$ would violate the minimality properties.) Similarly, if $x$ is a leaf node, then there exists some $v \in V(G)$ such that $B_x$ is the only bag containing $v$. The bag $B_x$ also contains the neighbours of $v$, but nothing else. 

Now, for every edge $vw \in E(G)$, choose some bag $B_x$ containing $v$ and $w$. Unless $x$ is a leaf with $B_x = \{v,w\}$, add to $T$ a new node $y$ adjacent to $x$, such that $B_y = \{v,w\}$. Clearly $(T, (B_x)_{x \in V(T)})$ is still a tree decomposition of the same width. From our above facts, every leaf node is either newly constructed or was already of the form $B_x = \{v,w\}$. Also, every node that previously had degree 2 now has higher degree. A node that was previously a leaf either remains a leaf, or now has degree at least 3. So no node of the new $T$ has degree 2.

If a node $x$ has degree greater than 3, then delete the edges from $x$ to two of its neighbours (denoted $y,z$), and add to $T$ a new node $s$ adjacent to $x,y$ and $z$. Let $B_s := B_x \cap (B_y \cup B_z)$. Clearly this is still a tree decomposition of the same width. Now the degree of $x$ has been reduced by 1, and the new node has degree 3. Repeat this process until all nodes have either degree 3 or 1.

Since each leaf bag contains exactly the endpoints of an edge (and no edge has both endpoints in more than one leaf), there is a bijective mapping $\theta$ that takes $vw \in E(G)$ to the leaf node containing $v$ and $w$. Together with $T$, this gives a branch decomposition of $G$. If $xy \in E(T)$, then all edges of $G$ across $xy$ are in $B_x \cap B_y$. So the order of this branch decomposition is at most $k$. Thus $\bw(G) \leq \tw(G)+1$.

(Note that our minimality properties would imply that $|B_x \cap B_y| < k$, however converting the tree to ensure that all nodes have degree 3 or 1 does not necessarily maintain this.) 
\end{proof}

\citet{RS-GraphMinorsX-JCTB91} showed the bounds in Lemma~\ref{lemma:twbranch} are tight. The upper bound on $\tw(G)$ in Lemma~\ref{lemma:twbranch} is tight for $K_n$ when $n$ is divisible by 3, since $\tw(K_n)=n-1$ and $\bw(K_n) = \tn(K_n) = \frac{2}{3}n$.
The lower bound on $\tw(G)$ is tight when $n \geq 4$ and $G$ is the graph $K_{n,n}$ minus a perfect matching. In this case $\tw(G) +1 = \bw(G) = \tn(G) = n$.

\section{Tree Products}
\label{section:treeproducts}
For a tree $T$, let $T \cdot K_k$ denote the lexicographic product of $T$ with $K_k$. That is, $T \cdot K_k$ is the graph created by taking $T$ and replacing each vertex with a clique of $k$ vertices, and replacing each edge with all possible edges between the two new cliques. The \emph{lexicographic tree product number} of $G$, denoted $\ltp(G)$, is the minimum integer $k$ such that $G$ is a minor of the graph $T \cdot K_k$ for some tree $T$.

\begin{lemma}
\label{lemma:ltp}
For every graph $G$, $$\ltp(G)-1\leq \tw(G)\leq2\,\ltp(G)-1.$$
\end{lemma}

\begin{proof}
First we prove that $\ltp(G)\leq\tw(G)+1$. Consider a tree decomposition of $G$ with width $k:=\tw(G)$ whose underlying tree is $T$. Clearly, $G$ is a minor of $T \cdot K_{k+1}$. Thus $\ltp(G)\leq k+1$.

Now we prove that $\tw(G)\leq2\ltp(G)-1$. Let $T$ be a tree such that $G$ is a minor of $T \cdot K_k$ where $k:=\ltp(G)$. For each vertex $v$ of $T$ let $K_v$ be the copy of $K_k$ that replaces $v$ in the construction of $T \cdot K_k$. Let $T'$ be the tree obtained from $T$ by subdividing each edge. Now we construct a tree decomposition of $T \cdot K_k$ whose underlying tree is $T'$. For each vertex $v$ of $T$, let the bag at $v$ consist of $K_v$. For each edge $vw$ of $T$ subdivided by vertex $x$, let the bag at $x$ consist of $K_v\cup K_w$. Thus each edge of $T \cdot K_k$ is in some bag, and the set of bags that contain each vertex of $T \cdot K_k$ form a connected subtree of $T'$. Hence we have a tree decomposition of $T'$. Each bag has size at most $2k$. Hence $\tw(T \cot K_k)\leq 2k-1$. (In fact, $\tw(T \cdot K_k)= 2k-1$ since $T \cdot K_k$ contains $K_{2k}$.)\ Thus every minor of $T'$, including $G$, has treewidth at most $2k-1$. 
\end{proof}

If $T$ is a tree, let $T^{(k)}$ denote the Cartesian product of $T$ with $K_k$. That is, the graph with vertex set $\{(x,i): x \in T, i \in \{1, \dots, k\}\}$ and with an edge between $(x,i)$ and $(y,j)$ when $x=y$, or when $xy \in E(T)$ and $i=j$. Then define the \emph{Cartesian tree product number} of $G$, $\ctp(G)$, to be the minimum integer $k$ such that $G$ is a minor of $T^{(k)}$. 
The parameter $\ctp(G)$ was first defined by \citet{holst} and \citet{yves}, however they did not use that name or notation, instead calling it \emph{largeur d'arborescence}, and denoting it $\textsf{\textup{la}}(G)$. They also proved the following result. We provide a different proof.

\begin{lemma}[\citet{holst,yves}]
\label{lemma:ctp}
For every graph $G$, $$\ctp(G)-1 \leq \tw(G) \leq \ctp(G).$$
\end{lemma}
\begin{proof}
Let $k:= \tw(G)$. By Lemma~\ref{lemma:ktree}, $G$ is the spanning subgraph of a chordal graph $G'$ that contains a $(k+1)$-clique but no $(k+2)$-clique. Let $(T, (B_x \subseteq V(G))_{x \in V(T)})$ be a minimum width tree decomposition of $G'$. This has width $k$ and is also a tree decomposition of $G$. To prove the first inequality, it is sufficient to show that $G$ is a minor of $T^{(k+1)}$. Let $c$ be a $(k+1)$-colouring of $G'$. (It is well known that chordal graphs are perfect.) For each $v \in V(G)$, define the connected subgraph $R_v$ of $T^{(k+1)}$ such that $R_v := \{(x,c(v)): v \in B_x\}$. If $(x,i) \in V(R_v) \cap V(R_w)$ then both $v$ and $w$ are in $B_x$ and $c(v)=c(w)=i$. But if $v$ and $w$ share a bag then $vw \in E(G')$, which contradicts the vertex colouring $c$. So the subgraphs $R_v$ are pairwise disjoint, for all $v \in V(G)$. If $vw \in E(G)$, then $v$ and $w$ share a bag $B_x$. Hence there is an edge $(x,c(v))(x,c(w))$ between the subgraphs $R_v$ and $R_w$. Hence the $R_v$ subgraphs form a $G$-model of $T^{(k+1)}$.

Now we prove the second inequality. Let $k:= \ctp(G)$, and choose tree $T$ such that $G$ is a minor of $T^{(k)}$. Since $\tw(G) \leq \tw(T^{(k)})$, it is sufficient to show that $\tw(T^{(k)}) \leq k$. Let $T'$ be the tree $T$ with each edge subdivided $k$ times. Label the vertices created by subdividing $xy \in E(T)$ as $xy(1), \dots, xy(k)$, such that $xy(1)$ is adjacent to $x$ and $xy(k)$ is adjacent to $y$. Construct $(T', (B_x \subseteq V(G))_{x \in V(T')})$ as follows. For a vertex $x \in T$, let $B_x = \{(x,i) | i \in \{1, \dots, k\}\}$. For a subdivision vertex $xy(j)$, let $B_{xy(j)} = \{(x,i),(y,i') | 1 \leq i' \leq j \leq i \leq k\}$. This is a valid tree decomposition with maximum bag size $k+1$. Hence $\tw(T^{(k)}) \leq k$ as required.
\end{proof}

The first inequalities in Lemmas~\ref{lemma:ltp} and \ref{lemma:ctp} are tight. Let $k,n$ be integers such that $n \geq 3$. Then the first inequalities in Lemma~\ref{lemma:ltp} and Lemma~\ref{lemma:ctp} are tight for $\psi_{n,k}$ \citep{thesis}. (Also see \citet{MS} for a similar result.) The second inequalities in Lemmas~\ref{lemma:ltp} and \ref{lemma:ctp} are tight for $K_n$ (for Lemma~\ref{lemma:ltp}, ensure that $n$ is even).

\section{Linkedness}
\label{section:linkedness}
\citet{Reed97} introduced the following definition. For a positive integer $k$, a set $S$ of vertices in a graph $G$ is \emph{$k$-linked} if for every set $X \subseteq V(G)$ such that $|X| < k$ there is a component of $G-X$ that contains more than half of the vertices in $S$. The \emph{linkedness} of $G$, denoted by $\link(G)$, is the maximum integer $k$ for which $G$ contains a $k$-linked set. Linkedness is used by \citet{Reed97} in his proof of the Grid Minor Theorem.

\begin{lemma}[\citet{Reed97}]
\label{lemma:lemmalink}
For every graph $G$, $$\link(G)\leq \bn(G)\leq2\,\link(G).$$
\end{lemma}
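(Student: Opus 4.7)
The plan is to prove each inequality by converting a witness for one parameter directly into a witness for the other.

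For $\link(G) \leq \bn(G)$, I would start with a $k$-linked set $S$ where $k = \link(G)$ and build a bramble from it. Let $\mathcal{B}$ consist of every connected subgraph of $G$ whose intersection with $S$ has more than $|S|/2$ vertices. Any two such subgraphs must touch, since otherwise they would be vertex-disjoint with no edge between them, making their $S$-intersections disjoint and summing to more than $|S|$. To show that $\mathcal{B}$ has order at least $k$, suppose for contradiction that some $X$ with $|X|<k$ hits every element of $\mathcal{B}$. Since $S$ is $k$-linked, some component $C$ of $G-X$ contains more than $|S|/2$ vertices of $S$; but $C$ is itself in $\mathcal{B}$ and disjoint from $X$, a contradiction.

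For $\bn(G) \leq 2\link(G)$, let $\beta$ be a bramble of maximum order and let $H$ be a minimum hitting set, so $|H| = \bn(G) =: k$. I would argue that $H$ itself is $\lceil k/2 \rceil$-linked. Given $X$ with $|X| < k/2$, the set $X$ fails to hit $\beta$; and because any two bramble elements touch, all elements of $\beta$ disjoint from $X$ must lie in a single component $C_X$ of $G - X$ (two such elements in distinct components could neither share a vertex nor be joined by an edge). The key step is then to observe that $(H \cap V(C_X)) \cup X$ is itself a hitting set of $\beta$, since a bramble element is either met by $X$ or, being disjoint from $X$, is contained in $C_X$ and hence hit by $H$ there. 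Minimality of $H$ then forces $|H \cap V(C_X)| \geq k - |X| > k/2 = |H|/2$, which is the linkedness condition.

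The main obstacle I anticipate is only the parity bookkeeping around the strict inequality in the definition of linkedness: one must verify that $|X| < k/2$ with $|X|$ integral really yields $k - |X| > k/2$ in both the even and odd cases, which is routine. Neither direction requires a deep structural result; the whole argument rests on the bramble ``touching'' axiom and on choosing the right hitting set to modify.
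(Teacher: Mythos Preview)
Your proposal is correct and follows essentially the same approach as the paper. The only cosmetic differences are that for the first inequality you take as your bramble \emph{all} connected subgraphs containing more than half of $S$ (the paper uses only the ``big components'' of $G-X$ over small $X$, a subfamily of yours), and for the second inequality you argue directly that a minimum hitting set $H$ is $\lceil \bn(G)/2\rceil$-linked, whereas the paper phrases the same computation as a proof by contradiction; in both directions the key step---showing that $X\cup(H\cap V(C_X))$ is again a hitting set and comparing its size to $|H|$---is identical.
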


\begin{proof}
First we prove that $\link(G)\leq \bn(G)$. Let $k:=\link(G)$. Let $S$ be a $k$-linked set of vertices in $G$. Thus, for every set $X$ of fewer than $k$ vertices there is a component of $G-X$ that contains more than half of the vertices in $S$. This component is unique. Call it the \emph{big} component. Let $\beta$ be the set of big components (taken over all such sets $X$). Clearly, any two elements of $\beta$ intersect at a vertex in $S$. Hence $\beta$ is a bramble. Let $H$ be a hitting set for $\beta$. If $|H|<k$ then (by the definition of $k$-linked) there is a component of $G-H$ that contains more than half of the vertices in $S$, implying $H$ does not hit some big component. This contradiction proves that $|H|\geq k$. Hence $\beta$ is a bramble of order at least $k$. Therefore $\bn(G)\geq k=\link(G)$. 

Now we prove that $\bn(G)\leq 2\,\link(G)$. Assume for the sake of a contradiction that $\bn(G) > 2\,\link(G)$. Let $k := \link(G)$, so $G$ is not $(k+1)$-linked. Let $H$ be a minimum hitting set for a bramble $\beta$ of $G$ of largest order. Since $H$ is not $(k+1)$-linked, there exists a set $X$ of order at most $k$ such that no component of $G-X$ contains more than half of the vertices in $H$. Note that at most one component of $G-X$ can entirely contain a bramble element of $\beta$ (otherwise two bramble elements do not touch). If no component of $G-X$ entirely contains a bramble element of $\beta$, then $X$ is a hitting set for $\beta$ of order $|X| \leq k < \frac{1}{2}\bn(G)$, which contradicts the order of the minimum hitting set. Finally, if a component of $G-X$ entirely contains some bramble element of $\beta$, then let $H' \subset H$ be the set of vertices of $H$ in that component. Now $H'$ intersects all of the bramble elements contained in the component (since those bramble elements do not intersect any other vertices of $H$), and $X$ intersects all remaining bramble elements, as in the previous case. Thus, $H' \cup X$ is a hitting set for $\beta$. However, $|X| \leq k < \frac{1}{2}\bn(G)$, and by the choice of $X$, $|H'| \leq \frac{1}{2}|H| = \frac{1}{2}\bn(G)$. So $|H' \cup X| = |H'| + |X| < \bn(G)$, again contradicting the order of the minimum hitting set.
\end{proof}

When $n$ is even $\link(K_n) = \frac{n}{2}$, so the second inequality is tight. The first inequality is tight since $\link(\psi_{n,k}) = \bn(\psi_{n,k}) = n$ when $k \geq 2$ and $n \geq 3$ \citep{thesis}.

\section{Well-linked and \texorpdfstring{$k$}{k}-Connected Sets}
For a graph $G$, a set $S \subseteq V(G)$ is \emph{well-linked} if for every pair $A,B \subseteq S$ such that $|A|=|B|$, there exists a set of $|A|$ vertex-disjoint paths from $A$ to $B$. If we can ensure these vertex-disjoint paths also have no internal vertices in $S$, then $S$ is \emph{externally-well-linked}. The notion of a well-linked set was first defined by \citet{Reed97}, while a similar definition was used by \citet{RST-JCTB94}. Reed also described externally-well-linked sets in the same paper (but did not define it explicitly) and stated but did not prove that $S$ is well-linked if and only if $S$ is externally-well-linked. We provide a proof below. 
The \emph{well-linked number} of $G$, denoted $\wl(G)$, is the size of the largest well-linked set in $G$.

\begin{lemma}[\citet{Reed97}]
\label{lemma:extwl}
$S$ is well-linked if and only if $S$ is externally-well-linked.
\end{lemma}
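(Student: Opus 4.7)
The plan is to prove both directions of the equivalence, with the forward direction (externally-well-linked implies well-linked) being immediate from the definitions: any family of vertex-disjoint $A$-$B$ paths that additionally avoids $S$ internally is, in particular, a family of vertex-disjoint $A$-$B$ paths. So the bulk of the work is in the reverse direction.

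For the reverse direction, suppose $S$ is well-linked and fix arbitrary $A, B \subseteq S$ with $|A| = |B|$. My idea is to enlarge both $A$ and $B$ so that applying well-linkedness to the enlarged sets automatically forces the ``obstructing'' vertices in $S \setminus (A \cup B)$ to correspond to trivial (single-vertex) paths. Concretely, set $T := S \setminus (A \cup B)$ and define $A^* := A \cup T$ and $B^* := B \cup T$. Since $|A| = |B|$ and $T$ is disjoint from both $A$ and $B$, we have $|A^*| = |B^*|$, so well-linkedness supplies a family $\mathcal{P}$ of $|A^*|$ pairwise vertex-disjoint paths from $A^*$ to $B^*$.

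Next I would observe that each $v \in A^* \cap B^* = T \cup (A \cap B)$ must correspond to the trivial one-vertex path $(v)$ in $\mathcal{P}$: the vertex $v$ is simultaneously the starting endpoint of some path and the ending endpoint of some path, and if those were distinct they would share $v$, violating vertex-disjointness. This accounts for $|T| + |A \cap B|$ of the paths in $\mathcal{P}$. The remaining $|A \setminus B|$ paths run from $A \setminus B$ to $B \setminus A$, and a short further vertex-disjointness check shows that no internal vertex of any such path can lie in $S$: any such internal vertex would belong to $A^* \cup B^*$ and hence be an endpoint of a different path in $\mathcal{P}$, again violating vertex-disjointness. Combining these $|A \setminus B|$ internally $S$-avoiding paths with the $|A \cap B|$ trivial paths at $A \cap B$ yields the required $|A|$ vertex-disjoint paths from $A$ to $B$ whose internal vertices avoid $S$, establishing externally-well-linked.

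The main obstacle is really just identifying the enlargement $A^*, B^*$; once it is in place, the vertex-disjointness of $\mathcal{P}$ does all the work, and no Menger-style min-cut argument, case analysis, or auxiliary construction is needed.
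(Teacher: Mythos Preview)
Your proposal is correct and follows essentially the same argument as the paper: your sets $T$, $A^*$, $B^*$ are exactly the paper's $C$, $A'$, $B'$, and the key observation that every vertex of $A^*\cap B^*$ must occur as a trivial one-vertex path is identical. If anything, you are slightly more careful than the paper in explicitly treating the vertices of $A\cap B$ (which are also forced to be trivial paths) and in spelling out why no internal vertex of a surviving path can lie in $S$.
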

\begin{proof}
It should be clear that if $S$ is externally-well-linked that $S$ is well-linked, so we prove the forward direction. Let $S \subseteq V(G)$ be well-linked. It is sufficient to show that for all $A,B \subseteq S$ with $|A|=|B|$ there are $|A|$ vertex-disjoint paths from $A$ to $B$ that are internally disjoint from $S$. Define $C := S \backslash (A \cup B)$ and $A' := A \cup C$ and $B' := B \cup C$. Now $S = A' \cup B'$. Since $S$ is well-linked and $|A'| = |B'|$, there are $|A'|$ vertex-disjoint paths between $A'$ and $B'$. Each such path uses exactly one vertex from $A'$ and one vertex from $B'$. Thus, if $v \in C \subseteq A \cap B$, then the path containing $v$ must simply be the singleton path $\{v\}$. Thus this path set contains a set of singleton paths for each vertex of $C$ and, more importantly, a set of paths starting in $A' \backslash C = A$ and ending at $B' \backslash C = B$. Since every vertex of $S$ is in either $A'$ or $B'$, and each path starts at a vertex in $A'$ and ends at one in $B'$, no internal vertex of these paths is in $S$. This is the required set of disjoint paths from $A$ to $B$ that are internally disjoint from $S$.
\end{proof}

\citet{Reed97} proved that $\bn(G) \leq \wl(G) \leq 4\,\bn(G)$. We provide Reed's proof of the first inequality and modify the proof of the second to give:
\begin{lemma}
\label{lemma:welllink}
For every graph $G$, $$\bn(G) \leq \wl(G) \leq 3\,\link(G) \leq 3\,\bn(G).$$ 
\end{lemma}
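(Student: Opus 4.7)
The rightmost inequality $3\link(G)\leq 3\bn(G)$ is immediate from \lemref{lemmalink}. For $\bn(G)\leq\wl(G)$, I will take a bramble $\beta$ of order $\bn(G)$ with a minimum hitting set $H$, so $|H|=\bn(G)$, and prove that $H$ is well-linked. Suppose for contradiction some $A,B\subseteq H$ with $|A|=|B|$ admit no $|A|$ vertex-disjoint $A$--$B$ paths; Menger's theorem supplies an $A$--$B$ separator $X$ with $|X|<|A|$. Any bramble element disjoint from $X$ lies in a single component of $G-X$, and since bramble elements pairwise touch, all such elements share a common component $C$. As $X$ separates $A$ from $B$ and $C$ is connected, $C$ avoids one of $A\setminus X$ and $B\setminus X$; say $C\cap(B\setminus X)=\emptyset$. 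Then $(H\setminus B)\cup X$ still hits every element of $\beta$ (those meeting $X$ directly; those inside $C$ via $H\setminus B$, since they avoid $B$), yet $|(H\setminus B)\cup X|\leq|H|-|B|+|X|<|H|$, contradicting the minimality of $H$.

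For $\wl(G)\leq 3\link(G)$ I adapt Reed's proof of $\wl\leq 4\bn$. Start with a well-linked set $S$ of size $k:=\wl(G)$ and show that $S$ itself is $\lceil k/3\rceil$-linked. Suppose for contradiction that some $X$ with $|X|<k/3$ leaves every component of $G-X$ with at most $k/2$ vertices of $S$; let $V_1,V_2,\ldots$ denote these components, write $s_i:=|S\cap V_i|$, and let $s:=|S-X|$ and $s_1=\max_i s_i\leq k/2$. The argument splits on $s_1$. In Case~1 ($s_1\leq k/3$) a greedy clean partition of components into two sides has imbalance at most $s_1$, and splitting a single component if necessary yields a partition $(A,B)$ of $S$ with $|A|=|B|=\lceil k/2\rceil$ and $\sum_i\min(|A\cap V_i|,|B\cap V_i|)\leq s_1/2\leq k/6$. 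Well-linkedness demands $\lceil k/2\rceil$ vertex-disjoint $A$--$B$ paths, but each path either stays inside some $V_i$ (contributing to $\sum_i\min$) or uses a vertex of $X$, so $\lceil k/2\rceil\leq|X|+\sum_i\min\leq|X|+k/6$, forcing $|X|\geq k/3$ and contradicting $|X|<k/3$. In Case~2 ($k/3<s_1\leq k/2$) the well-linked hypothesis applied to $(\{V_1\},\{V_j\})$ forces $V_1$ to be the unique component with $s_i>|X|$, so $s-s_1\leq|X|$. The clean partition $\mathcal{A}=\{V_1\}$, $\mathcal{B}=\{\text{other components}\}$ has imbalance $|2s_1-s|\leq k-s=|S\cap X|$ (since $s_1\leq k/2$ and $s\leq s_1+|X|$), so distributing $S\cap X$ appropriately achieves $|A|=|B|=k/2$ with $\sum_i\min=0$; then $k/2\leq|X|<k/3$, absurd.

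The main obstacle is Case~1 of the middle inequality: carrying out the split-and-balance construction so that $\sum_i\min\leq k/6$, which together with $|X|<k/3$ closes the well-linked flow-counting inequality and produces the constant~$3$ rather than Reed's constant~$4$. The crucial point is that the contradiction hypothesis restricts every component to at most $k/2$ of $S$ while the pairwise well-linked constraint forces at most one component to hold more than $|X|$ of $S$, so the bookkeeping always collapses to one of the two clean cases above.
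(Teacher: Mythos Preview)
Your proofs of the outer inequalities match the paper's: the argument that a minimum hitting set of an optimal bramble is well-linked (via Menger and a smaller-hitting-set contradiction) is exactly the paper's, and the last inequality is just \lemref{lemmalink}.

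For the middle inequality $\wl(G)\leq 3\link(G)$ you take a genuinely different route from the paper. The paper first upgrades well-linkedness to \emph{external} well-linkedness (\lemref{extwl}), then partitions the components of $G-X$ into two or three parts via the \lemref{comptrick} argument, chooses $A,B\subseteq S$ in different parts, and counts that the $|A|$ disjoint $A$--$B$ paths (with no internal vertex in $S$) must all pass through $X\setminus S$, giving $|A|\leq |X\setminus S|<k-a$ with $a=|X\cap S|$. This single counting step, together with $|A|\geq\min\{\tfrac13(|S|-a),\tfrac12|S|-a\}$, closes the argument with no case split.

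Your approach instead uses only ordinary well-linkedness and bounds the number of $A$--$B$ paths by $|X|+\sum_i\min(|A\cap V_i|,|B\cap V_i|)$, splitting on the size of $s_1$. The idea is sound and Case~2 goes through cleanly. But Case~1 has a real gap: you assert that a greedy clean partition plus ``splitting a single component if necessary'' produces $|A|=|B|=\lceil k/2\rceil$ with $\sum_i\min\leq s_1/2$, yet you do not justify this construction. When $k$ is odd you must allow $A$ and $B$ to overlap in one vertex, and you need to place that overlap vertex so it does not raise $\sum_i\min$; you also need the component you split to sit on the larger side and be big enough to absorb the required transfer. None of this is automatic from a greedy bipartition, and your sketch does not address it. The arithmetic $\lceil k/2\rceil\leq |X|+k/6<k/2$ only yields a contradiction once that construction is actually in hand.

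In short: your approach can probably be completed, but as written Case~1 is an assertion, not a proof. The paper's use of external well-linkedness buys a much shorter, case-free argument at the cost of one easy auxiliary lemma; your route avoids that lemma but pays for it in combinatorial bookkeeping that you have not yet carried out.
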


\begin{proof}
We first prove that $\bn(G) \leq \wl(G)$. Assume for the sake of a contradiction that $\wl(G) < \bn(G)$. Let $\beta$ be a bramble of largest order, and $H$ a minimal hitting set of $\beta$. Thus $H$ is not well-linked (since $|H| = \bn(G) > \wl(G)$). Choose $A,B \subseteq H$ such that $|A|=|B|$ but there are not $|A|$ vertex-disjoint paths from $A$ to $B$. By Menger's Theorem, there exists a set of vertices $C$ with $|C| < |A|$ such that after deleting $C$, there is no $A$-$B$ path in $G$. Now consider a bramble element of $\beta$.
If two components of $G-C$ entirely contain bramble elements, then those bramble elements cannot touch. Thus, it follows that at most one component of $G-C$ entirely contains some bramble element. Label this component $C'$; if no such component exists label $C'$ arbitrarily. Since $C'$ does not contain vertices from both $A$ and $B$, without loss of generality we assume $A \cap C' = \emptyset$. Thus all bramble elements entirely within $C'$ are hit by vertices of $H \backslash A$, and all others are hit by $C$. So $(H \backslash A) \cup C$ is a hitting set for $\beta$, but $|(H \backslash A) \cup C| = |H|-|A|+|C| < |H|$, contradicting the minimality of $H$. Hence $\bn(G) \leq \wl(G)$.

Now we show that $\wl(G) \leq 3\,\link(G)$. For the sake of a contradiction, say $3\,\link(G)< \wl(G)$. Define $k := \frac{1}{3}\wl(G)$. Let $S$ be the largest well-linked set. That is, $|S| = \wl(G)$. By Lemma~\ref{lemma:extwl} $S$ is externally-well-linked. The set $S$ is not $\ceil{k}$-linked since $\link(G) < \ceil{k}$. Hence there exists a set $X \subseteq V(G)$ with $|X| < \ceil{k}$ such that $G-X$ contains no component containing more than $\frac{1}{2}|S|$ vertices of $S$. Since $|X|$ is an integer, $|X| < k$. Let $a := |X \cap S|$.

Using an argument similar to Lemma~\ref{lemma:comptrick}, the components of $G-X$ can be partitioned into two or three parts, each with at most $\frac{1}{2}|S|$ vertices of $S$. Some part contains at least a third of the vertices of $S \backslash X$. Let $A$ be the set of vertices in $S$ contained in that part, and let $B$ be the set of vertices in $S$ in the other parts of $G-X$. Now $\frac{1}{2}|S| \geq |A| \geq \frac{1}{3}|S \backslash X| = \frac{1}{3}(|S| - a)$, and so $|B| \geq |S| - |S \cap X| - |A| \geq |S| - a - \frac{1}{2}|S|$. Remove vertices arbitrarily from the largest of $A$ and $B$ until these sets have the same order. Hence $|A|=|B|$ and $|A| \geq \min\{\frac{1}{3}(|S| - a), \frac{1}{2}|S| - a\}$. Since $A,B \subseteq S$ and $S$ is externally-well-linked, there are $|A|$ vertex-disjoint paths from $A$ to $B$ with no internal vertices in $S$. Since $A$ and $B$ are in different components of $G-X$, these paths must use vertices of $X$, but more specifically, vertices of $X \backslash S$. Thus there are at most $|X \backslash S|$ such paths. Thus $|A| \leq |X \backslash S| < k-a$.

Either $\frac{1}{3}(|S|-a) \leq |A| < k-a$ or $\frac{1}{2}|S| - a \leq |A| < k-a$, so $|S| < 3k$. However, $|S|=\wl(G)=3k$, which is a contradiction.

The final inequality follows from Lemma~\ref{lemma:lemmalink}. 
\end{proof}

The first inequality in Lemma~\ref{lemma:welllink} is tight since $\bn(K_n) = \wl(K_n) = n$. We do not know if the second inequality is tight, but $\wl(G) \leq 2\,\bn(G)-2$ would be best possible since $\bn(K_{2n,n}) = n+1$ and $\wl(K_{2n,n}) = 2n$ (the larger part is the largest well-linked set).  

\citet{diestelHC} defined the following: $S \subseteq V(G)$ is \emph{$k$-connected} in $G$ if $|S| \geq k$ and for all subsets $A,B \subseteq S$ with $|A| = |B| \leq k$, there are $|A|$ vertex-disjoint paths from $A$ to $B$. If we can ensure these vertex-disjoint paths have no internal vertex or edge in $G[S]$, then $S$ is \emph{externally $k$-connected}. This notion was used in \citep{diestelHC} to prove a short version of the grid minor theorem. 

Note the obvious connection to well-linked sets: $X$ is well-linked if and only if $X$ is $|X|$-connected. Also note that \citet{Diestel00a}, in his treatment of the grid minor theorem, provides a slightly different formulation of externally $k$-connected sets, which only requires vertex-disjoint paths between $A$ and $B$ when they are disjoint subsets of $S$. These definitions are equivalent, which can be proven using a similar argument as in Lemma~\ref{lemma:extwl}. \citet{Diestel00a} also does not use the concept of $k$-connected sets, just externally $k$-connected sets.

\citet{diestelHC} prove the following, but due to its similarity between $k$-connected sets and well-linked sets, we omit the proof.
\begin{lemma}[\citet{diestelHC}]
If $G$ has $\tw(G) < k$ then $G$ contains $(k+1)$-connected set of size $\geq 3k$.
If $G$ contains no externally $(k+1)$-connected set of size $\geq 3k$, then $\tw(G) < 4k$.
\end{lemma}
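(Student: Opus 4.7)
My plan is to mirror the arguments of \lemref{welllink}, as the paper's parenthetical comment suggests, replacing well-linked sets by $(k+1)$-connected sets in the first implication and by externally $(k+1)$-connected sets in the second. The only substantive step is a Menger-plus-bramble argument directly parallel to the proof of $\bn(G)\leq\wl(G)$ there; after that, everything reduces to chasing the chain of inequalities already established.

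For the first implication I would begin with a bramble $\beta$ of maximum order $\bn(G)=\tw(G)+1$ together with a minimum hitting set $H$, and argue that $H$ itself is $(k+1)$-connected. Suppose for contradiction that it is not, and choose $A,B\subseteq H$ of equal size $s\leq k+1$ admitting fewer than $s$ pairwise vertex-disjoint $A$-$B$ paths. By Menger's theorem there is a set $C\subseteq V(G)$ separating $A$ from $B$ with $|C|<s\leq k+1$, hence $|C|\leq k$. Exactly as in \lemref{welllink}, at most one component $C'$ of $G-C$ can wholly contain a bramble element, for otherwise two bramble elements would fail to touch; since $A$ and $B$ lie on opposite sides of the separation we may assume $A\cap C'=\emptyset$. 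Every bramble element of $\beta$ is then hit either by $C$ or by some vertex of $H\setminus A$ sitting inside $C'$, so $(H\setminus A)\cup C$ is a hitting set of size $|H|-|A|+|C|<|H|$, contradicting the minimality of $H$. Since the hypothesis on $\tw(G)$ forces $|H|=\bn(G)$ to be at least $3k$, this yields the required $(k+1)$-connected set of size $\geq 3k$.

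For the second implication I would feed this through the chain $\bn(G)\leq\wl(G)\leq 3\,\link(G)\leq 3\,\bn(G)$ of \lemref{welllink} combined with \lemref{extwl}. Any well-linked set $S$ is $|S|$-connected by definition, and hence $(k+1)$-connected whenever $|S|\geq k+1$; \lemref{extwl} upgrades this to externally $(k+1)$-connected. So a lower bound $\wl(G)\geq 3k$ immediately supplies an externally $(k+1)$-connected set of size $\geq 3k$, which is guaranteed by the hypothesis $\tw(G)\geq 4k$ via $\wl(G)\geq\bn(G)=\tw(G)+1$. The main obstacle is the size arithmetic in the first implication: the strict inequality $|C|<|A|$ coming from Menger, combined with $|A|\leq k+1$, is exactly what forces $|C|\leq k$ and makes the replacement hitting set $(H\setminus A)\cup C$ strictly smaller than $H$. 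Once this template is in place, the rest is bookkeeping.
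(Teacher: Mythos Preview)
The paper explicitly omits the proof, citing \citet{diestelHC}, so there is nothing to compare against directly. That said, your attempt has genuine problems in both parts.

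For the first implication you have the direction reversed. As printed, the statement is garbled (an edgeless graph has $\tw(G)<1$ yet contains no $2$-connected set); the intended reading is that $\tw(G)<k$ implies $G$ has \emph{no} $(k+1)$-connected set of size $\geq 3k$. Your argument instead tries to \emph{produce} such a set from the hypothesis: you show the minimum hitting set $H$ of a maximum bramble satisfies the $(k+1)$-connectedness path condition (in fact you show $H$ is well-linked---this is exactly the $\bn(G)\leq\wl(G)$ half of Lemma~\ref{lemma:welllink}), and then assert that ``the hypothesis on $\tw(G)$ forces $|H|\geq 3k$''. But the stated hypothesis gives $|H|=\tw(G)+1\leq k$, not $\geq 3k$. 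The analogue you actually want is the \emph{other} half of Lemma~\ref{lemma:welllink}: start from a $(k+1)$-connected set $W$ with $|W|\geq 3k$ and mimic the $\wl(G)\leq 3\,\link(G)$ argument to deduce $\bn(G)\geq k+1$.

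For the second implication, Lemma~\ref{lemma:extwl} does not deliver what you claim. Externally-well-linked only forbids internal \emph{vertices} of the paths from lying in $S$; externally $(k+1)$-connected, as defined here following \citet{diestelHC}, also forbids the paths from using any \emph{edge} of $G[S]$. These differ: in $K_n$ with $n\geq 4k+1$ the full vertex set is externally-well-linked, yet not externally $(k+1)$-connected, since every nontrivial path must use an edge of $G[S]=G$. So taking a largest well-linked set and quoting Lemma~\ref{lemma:extwl} does not suffice; one must pass to a subset of size exactly $3k$ and give a separate argument for avoiding $G[S]$-edges, or follow the original construction in \citet{diestelHC}.
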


\section{Grid Minors}
\label{section:gridminors}
A key part of the Graph Minor Structure Theorem is as follows: given a fixed planar graph $H$, there exists some integer $r_{H}$ such that every graph with no $H$-minor has treewidth at most $r_{H}$. This cannot be generalised to when $H$ is non-planar, since there exist planar graphs, the grids, with unbounded treewidth. (By virtue of being planar, the grids do not contain a non-planar $H$ as a minor.) In fact, since every planar graph is the minor of some grid, it is sufficient to just consider the grids, which leads to the Grid Minor Theorem:

\begin{theorem}[\citet{RS-GraphMinorsV-JCTB86}]
For each integer $k$ there is a minimum integer $f(k)$ such that every graph with treewidth at least $f(k)$ contains the $k \times k$ grid as a minor.
\end{theorem}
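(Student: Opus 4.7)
The strategy is to leverage the chain of tied parameters developed in the paper, following the approach of \citet{diestelHC}. Assume $\tw(G) \geq 4N$ where $N = N(k)$ is to be chosen sufficiently large. By the contrapositive of the previous lemma, $G$ contains an externally $(N+1)$-connected set $S$ with $|S| \geq 3N$. The plan is to construct the $k \times k$ grid as a minor using $S$ as a scaffolding: the external well-linkedness of $S$ will supply, on demand, large systems of internally $S$-avoiding vertex-disjoint paths between chosen subsets of $S$, which are precisely the building blocks needed to assemble a grid.

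The construction itself proceeds in two main stages. First, I would use the external $(N+1)$-connectedness to build $k$ vertex-disjoint ``row'' paths $R_1, \ldots, R_k$, each visiting $k$ designated vertices of $S$ in a prescribed linear order. Second, I would find $k$ further internally disjoint ``column'' paths crossing these rows in the correct order; again Menger-type extraction of disjoint paths, enabled by the well-linkedness of $S$, supplies what is needed. Identifying each row with a row of the grid and each column with a column then yields the required $k \times k$ grid model, provided the crossings happen in the right order and with no spurious repetitions.

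The main obstacle is exactly this last caveat: the column paths built by Menger's theorem will in general cross the rows in the wrong order or more than once, spoiling the intended grid structure. Repairing this requires iteratively rerouting columns along row segments and discarding damaged portions, with each repair consuming part of the ``budget'' of available vertices of $S$. Each such repair costs at least a constant (typically much worse) factor in the usable size of $S$, so $N$ must be taken enormously larger than $k$; this is precisely where the very fast-growing bound on $f(k)$ in the original proof of \citet{RS-GraphMinorsV-JCTB86} arises. Carrying out the intricate combinatorial bookkeeping for this final rerouting step is the technical heart of the theorem, and rather than reproducing it I would refer to the proofs in \citet{Diestel00a}, \citet{Reed97}, or \citet{RS-GraphMinorsV-JCTB86}.
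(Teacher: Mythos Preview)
The paper does not prove this theorem. It states the Grid Minor Theorem with a citation to \citet{RS-GraphMinorsV-JCTB86}, then surveys the history of bounds on $f(k)$ (Robertson--Seymour--Thomas, Diestel et al., Kawarabayashi--Kobayashi, Leaf--Seymour, Chekuri--Chuzhoy), and only proves the easy converse direction, \lemref{gridbramble}: a $k\times k$ grid minor forces $\tw(G)\geq k$. The hard direction is treated as a black box from the literature.

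Your proposal therefore goes beyond what the paper attempts: you outline the Diestel-style argument via externally $(N+1)$-connected sets, identify the rerouting of columns as the technical core, and then---like the paper---defer to references for that step. As a high-level sketch of one standard approach this is accurate, but it is not a self-contained proof, and the paper did not intend to give one. One minor quibble: the lemma you invoke (``if $G$ contains no externally $(k+1)$-connected set of size $\geq 3k$ then $\tw(G)<4k$'') is itself stated without proof in the paper, so even your starting point already rests on a citation. In short, there is no discrepancy to flag; both you and the paper treat the Grid Minor Theorem as a cited result rather than something to be proved here.
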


All of our previous sections have provided parameters with linear ties to treewidth. However, the order of the largest grid minor is not linearly tied to treewidth.
The initial bound on $f(k)$ by \citet{RS-GraphMinorsV-JCTB86} was an iterated exponential tower. Later, \citet{RST-JCTB94} improved this to $f(k) \leq 20^{2k^5}$. They also note, by use of a probabilistic argument, that $f(k) \geq \Omega(k^2\log k)$. \citet{diestelHC} obtained an upper bound of $2^{5k^{5}\log k}$, which is actually slightly worse than the bound provided by Robertson, Seymour and Thomas, but with a more succinct proof. \citet{KenandYusuke} proved that $f(k) \leq 2^{O(k^2 \log k)}$, and \citet{SeymourLeaf} proved that $f(k) \leq 2^{O(k\log k)}$. The function $f(k)$ was first shown to be polynomial by \citet{CC14}, who showed $f(k) \leq O(k^{98} \text{polylog}~k)$. A recent result of \citet{Chuzhoy15} improves this to $f(k) \leq O(k^{36} \text{polylog}~k)$. Together with the following lower bound, this implies that treewidth and the order of the largest grid-minor are polynomially tied.

\begin{lemma}[Folklore]
\label{lemma:gridbramble}
If $G$ contains a $k \times k$ grid minor, then $\tw(G) \geq k$.
\end{lemma}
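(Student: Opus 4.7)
The plan is to combine two standard ingredients: minor-monotonicity of treewidth (to reduce to the grid itself) and the Treewidth Duality Theorem (to recast the bound as a bramble of order $k+1$).

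I would first argue that treewidth is minor-monotone: given an $H$-model $\{M_v : v \in V(H)\}$ in $G$ and a tree decomposition $(T, (B_x))$ of $G$ of width $w$, the bags of $T$ that meet a fixed branch $M_v$ form a connected subtree of $T$ (since $M_v$ is connected), and assigning each $v \in V(H)$ to this subtree yields a tree decomposition of $H$ of width at most $w$. Applied to the given $k \times k$ grid minor of $G$, this reduces the claim to showing $\tw(\Gamma_k) \geq k$, where $\Gamma_k$ denotes the $k \times k$ grid.

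By the Treewidth Duality Theorem (\thmref{twdual}), it suffices to exhibit a bramble of order $k+1$ in $\Gamma_k$. A natural first candidate is the family of ``crosses'' $C_{i,j} := R_i \cup C_j$, where $R_i$ is the $i$-th row and $C_j$ the $j$-th column of $\Gamma_k$: each cross is connected, and any two crosses $C_{i,j}, C_{i',j'}$ share the vertex $(i, j')$, so they form a bramble. Any hitting set must meet every row or every column of $\Gamma_k$ (otherwise some cross is missed, namely the one indexed by an uncovered row and an uncovered column), so has size at least $k$. This gives only $\bn(\Gamma_k) \geq k$ and hence $\tw(\Gamma_k) \geq k-1$.

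The main obstacle is bridging the final unit: the cross bramble has order exactly $k$, since the main diagonal $\{(i,i) : 1 \leq i \leq k\}$ is a hitting set of size $k$. To push the order to $k+1$, I would augment the cross family with carefully chosen additional connected subgraphs which touch every cross, yet are missed by every size-$k$ hitting set of the crosses. Such a size-$k$ hitting set is necessarily a ``transversal'' (one vertex per row, or one vertex per column), so the extra elements must simultaneously touch every cross while evading every row- or column-transversal. Constructing such augmenting subgraphs, which exploit the rigid two-dimensional structure of $\Gamma_k$, is the technical heart of the argument; once achieved, they promote the order of the bramble from $k$ to the required $k+1$, and hence $\tw(\Gamma_k) \geq k$.
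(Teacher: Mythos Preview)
Your overall framework matches the paper's exactly: reduce via minor-monotonicity of treewidth to the grid itself, then invoke \thmref{twdual} and exhibit a bramble of order $k+1$. You also correctly diagnose that the full cross bramble $\{R_i\cup C_j : 1\leq i,j\leq k\}$ has order only $k$.

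The gap is in your proposed fix. You say you would ``augment the cross family with carefully chosen additional connected subgraphs which touch every cross, yet are missed by every size-$k$ hitting set of the crosses'', and you observe that such a hitting set is a row- or column-transversal. But no non-empty subgraph of $\Gamma_k$ evades every row-transversal: given any vertex $v$, there is a row-transversal containing $v$ (put $v$ in its row and choose arbitrarily elsewhere). So keeping all $k^2$ crosses and adjoining extra elements cannot raise the order in the way you describe; the ``technical heart'' you defer is not merely technical but, as stated, unworkable.

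The paper's resolution is to \emph{shrink} the crosses rather than augment around them. Take the top-left $(k-1)\times(k-1)$ subgrid $H'$ and use only the $(k-1)^2$ crosses inside $H'$; any hitting set now needs $k-1$ vertices in $V(H')$ to cover these. Then adjoin two further bramble elements that live entirely outside $H'$: the bottom row $X$ of the full grid, and the rightmost column minus its bottom vertex, $Y$. These are connected, disjoint from $H'$ and from each other, touch every cross (via the edge from the bottom vertex of a cross to $X$, and from its right vertex to $Y$), and touch each other (via the bottom-right corner edge). Hitting $X$ and $Y$ therefore costs two vertices disjoint from the $k-1$ already spent in $H'$, giving order at least $(k-1)+2=k+1$. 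The key idea you are missing is precisely this: sacrifice one row and one column of crosses so that the augmenting elements can be placed \emph{outside} the region where the transversal argument applies.
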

\begin{proof}
If $H$ is a minor of $G$ then $\tw(H) \leq \tw(G)$. Thus it suffices to prove that the $k \times k$ grid $H$ has treewidth at least $k$, which is implied if $\bn(H) \geq k+1$. Consider $H$ drawn in the plane. For a subgraph $S$ of $H$, define a \emph{top vertex} of $S$ in the obvious way. (Note it is not necessarily unique.) Similarly define \emph{bottom vertex}, \emph{left vertex} and \emph{right vertex}. Let subgraph $H'$ of $H$ be the top-left $(k-1) \times (k-1)$ grid in $H$. A \emph{cross} is a subgraph containing exactly one row and column from $H'$, and no vertices outside $H'$. Let $X$ denote the bottom row of $H$, and $Y$ the right column without its bottom vertex. Let $\beta:= \{X,Y, \text{ all crosses}\}$. A pair of crosses intersect in two places. There is an edge from a bottom vertex of a cross to $X$ and a right vertex of a cross to $Y$. There is also an edge from the right vertex of $X$ to the bottom vertex of $Y$. Hence $\beta$ is a bramble. If $Z$ is a hitting set for $\beta$, it must contain $k-1$ vertices of $V(H')$, for otherwise a row and column are not hit, and so a cross is not hit. The set $Z$ must also contain two other vertices to hit $X$ and $Y$. So $|Z| \geq k+1$, as required. 
\end{proof}

\section{Grid-like Minors}
\label{section:gridlikeminors}
A \emph{grid-like-minor of order $t$} of a graph $G$ is a set of paths $\PP$ in $G$ with a bipartite intersection graph that contains a $K_t$-minor. Note that if the intersection graph of $\PP$ is partitioned $A$ and $B$, then we can think of the set of paths $A$ as being the ``rows" of the ``grid", and the set $B$ being the ``columns". Also note that an actual $k \times k$ grid gives rise to a set $\PP$ with an intersection graph $K_{k,k}$ and as such contains a complete minor of order $k$+1. Let $\glm(G)$ be the maximum order of a grid-like-minor of $G$. Grid-like-minors were first defined by \citet{ReedWood-EuJC} as a weakening of a grid minor; see Section~\ref{section:gridminors}. As a result of this weakening, it is easier to tie $\glm(G)$ to $\tw(G)$. This notion has also been applied to prove computational intractability results in monadic second order logic; see \citet{MR2904950,Ganian14} and \citet{KT10,MR2809681}.

The following definitions were independently introduced by \citet{Fox11}\footnotemark[2] and \citet{Pedersen11}. \footnotetext[2]{\citet{Fox11} states that the definitions were independently introduced by Seymour.}Given a graph $G$, consider a bramble $\beta$ together with a function $w$ which assigns a weight to each subgraph in $\beta$, such that for any vertex $v$, the sum of the weights of the bramble elements containing $v$ is at most $1$. Let $h(\beta,w)=\sum_{X \in \beta} w(X)$. The \emph{fractional Hadwiger number} of $G$, denoted $\had_f(G)$, is the maximum of $h(\beta,w)$ over all $\beta,w$ where the weights assigned by $w$ are non-negative real numbers. For a positive integer $r$, the \emph{$r$-integral Hadwiger number} of $G$, denoted $\had_r(G)$, is the maximum of $h(\beta,w)$ over all $\beta,w$ where the weights assigned by $w$ are integer multiples of $\frac{1}{r}$. It is clear that $\had_f(G) \geq \had_r(G)$ for every $G$ and positive integer $r$.
As an example, the branch sets of a $K_{\had(G)}$-minor form a bramble, and we set the weight of each branch set to be $1$. Thus $\had_f(G) \geq \had_r(G) \geq \had(G)$ for all positive integers $r$. 

The graph $G\square K_2$ (that is, the Cartesian product of $G$ with $K_2$) consists of two disjoint copies of $G$ with an edge between corresponding vertices in the two copies. 

\citet{Fox11} proved that $\had(G\square K_r) = r\,\had_r(G) \leq r\,\had_f(G)$. \citet{ReedWood-EuJC} proved that $\glm(G) \leq \had(G\square K_2)$. Here we provide a proof.
\begin{lemma}
\label{lemma:glmhad}
  For every graph $G$ and integer $r\geq2$,
$$\glm(G) \leq \had(G\square K_2) \leq 3\,\had_r(G) \enspace,$$
and if $r$ is even then
$$\glm(G) \leq \had(G\square K_2) \leq 2\,\had_r(G) \leq 2\,\had_f(G)\enspace.$$
\end{lemma}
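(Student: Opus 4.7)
My plan is to handle the two inequalities separately: for the first, lift a grid-like-minor up to $G\square K_2$, and for the second, project a complete minor of $G\square K_2$ back down to a bramble in $G$.

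For $\glm(G)\leq\had(G\square K_2)$, suppose $\PP$ is a grid-like-minor of order $t$, with a bipartition $A\cup B$ of its intersection graph. I would place each path $P\in A$ inside the $G_0$-copy of $G\square K_2$ and each path $P\in B$ inside the $G_1$-copy. The resulting paths in $G\square K_2$ are pairwise vertex-disjoint; and whenever $P\in A$ meets $Q\in B$ at a vertex $v$ of $G$, the edge $(v,0)(v,1)$ of $G\square K_2$ joins their two lifts. Hence the adjacency pattern among the lifted paths is exactly the intersection graph of $\PP$, and merging lifted paths along the branches of the $K_t$-minor inside the intersection graph yields a $K_t$-model in $G\square K_2$.

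For the upper bounds, let $h:=\had(G\square K_2)$ with branches $B_1,\dots,B_h$ of a $K_h$-model in $G\square K_2$, and define projections $B_i':=\{v\in V(G):(v,0)\in B_i\text{ or }(v,1)\in B_i\}$. The natural projection $G\square K_2\to G$ sends each horizontal edge to an edge of $G$ and contracts each vertical edge to a single vertex, so it carries the connected subgraph $B_i$ to a connected subgraph of $G[B_i']$; in particular $G[B_i']$ is connected. Each vertex of $G$ lies in at most two of the $B_i'$ (one per level), and any edge of $G\square K_2$ between $B_i$ and $B_j$ either descends to an edge between $G[B_i']$ and $G[B_j']$ or is a vertical edge giving a shared vertex. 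Hence $\BB:=\{G[B_i']:1\leq i\leq h\}$ is a bramble in $G$ that covers each vertex at most twice.

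Finally, I would assign the uniform weight $w:=\lfloor r/2\rfloor/r$ to every element of $\BB$. This is a multiple of $1/r$, and each vertex receives total weight at most $2w\leq 1$, so we have a valid $r$-integral weighting of $\BB$ with total mass $hw$. If $r$ is even then $w=1/2$, so $\had_r(G)\geq h/2$, giving $\had(G\square K_2)\leq 2\,\had_r(G)\leq 2\,\had_f(G)$ (the last step being immediate, since every $r$-integral weighting is a fractional weighting). For general $r\geq 2$, one checks $w\geq 1/3$ directly for $r\in\{2,3\}$ and uses $\lfloor r/2\rfloor\geq(r-1)/2\geq r/3$ for $r\geq 3$, yielding $\had_r(G)\geq h/3$. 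The only delicate point in the argument is the connectivity of $G[B_i']$, which reduces to observing that the projection $G\square K_2\to G$ preserves adjacency modulo the collapse of vertical edges; everything else is bookkeeping.
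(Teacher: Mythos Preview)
Your proposal is correct and follows essentially the same approach as the paper: lift the two sides of the bipartition of $\PP$ into the two copies of $G$ in $G\square K_2$ and read off the $K_t$-minor from the intersection graph, then project the branches of a $K_t$-model in $G\square K_2$ down to a bramble in $G$ with each vertex covered at most twice and weight every element by $\lfloor r/2\rfloor/r$. Your treatment of the connectivity of the projected branches is slightly more explicit than the paper's, but otherwise the arguments are identical.
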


\begin{proof}
Let $t:=\glm(G)$. It suffices to show there exists a $K_t$-model in $G\square K_2$. Label the vertices of $K_2$ as $1$ and $2$, so a vertex of $G \square K_2$ has the form $(v,i)$ where $v \in V(G)$ and $i \in \{1,2\}$. If $S$ is a subgraph of $G$, define $(S,i)$ to be the subgraph of $G \square K_2$ induced by $\{(v,i) | v \in S\}$. Let $H$ be the intersection graph of a set of paths $\PP$ with bipartition $A,B$, such that $H$ has a $K_t$-minor. For each $P \in \PP$, let $P' := (P,i)$ where $i=1$ if $P \in A$, and $i=2$ if $P \in B$.

If $PQ \in E(H)$, then without loss of generality $P \in A$ and $Q \in B$, and there exists a vertex $v$ such that $v \in V(P) \cap V(Q)$. Then the edge $(v,1)(v,2) \in E(G \square K_2)$ has one endpoint in $P'$ and the other in $Q'$. So $P' \cup Q'$ is connected.

Let $X_1, \dots, X_t$ be the branch sets of a $K_t$-model in $H$. Define $X_i':= \bigcup_{P \in X_i} P'$. Now each $X_i'$ is connected. It is sufficient to show, for $i \neq j$, that $V(X_i' \cap X_j') = \emptyset$ and there exists an edge of $G \square K_2$ with one endpoint in $X_i'$ and the other in $X_j'$.
If there exists $v \in V(X_i' \cap X_j')$ then there exists $P'$ such that $v \in P'$ and $P' \in X_i' \cap X_j'$. But then $P \in X_i \cap X_j$, which is a contradiction. So $V(X_i' \cap X_j') = \emptyset$.
Also, since $X_1, \dots, X_t$ is a $K_t$-model of $H$, there exists some $PQ \in E(H)$ such that $P \in X_i$ and $Q \in X_j$. From above, there exists an edge between $P'$ and $Q'$ in $G \square K_2$, which is sufficient.

For the other inequalities, let $X_1,\dots,X_t$ be the branch sets of a $K_t$-minor in $G\square
  K_2$, where $t:=\had(G\square K_2)$. Let $X'_i$ be the projection of
  $X_i$ into the first copy of $G$. Thus $X'_i$ is a connected subgraph
  of $G$. If $X_i$ and $X_j$ are joined by an edge between the two
  copies of $G$, then $X'_i$ and $X'_j$ intersect. Otherwise, $X_i$
  and $X_j$ are joined by an edge within one of the copies $G$, in
  which case, $X'_i$ and $X'_j$ are joined by an edge in $G$.  Thus
  $X'_1,\dots,X'_t$ is a bramble in $G$. Weight each $X'_i$ by
  $\floor{\frac{r}{2}}/r$, which is at least $\frac{1}{3}$ and at most
  $\frac{1}{2}$.  Since $X_1,\dots,X_t$ are pairwise disjoint, each
  vertex of $G$ is in at most two of $X'_1,\dots,X'_t$. Hence the sum
  of the weights of $X'_i$ that contain a vertex $v$ is at most
  $1$. Hence $\had_r(G)$ is at least the total weight, which is at
  least $\frac{t}{3}$. That is, $\had(G\square K_2)\leq 3\had_r(G)$.
  If $r$ is even then the total weight equals $\frac{t}{2}$ and
  $\had(G\square K_2)\leq 2\,\had_r(G)$, which is at most
  $2\,\had_f(G)$ by definition.
\end{proof}

\begin{lemma}
\label{lemma:hadbn}
  For every graph $G$,
$$\had_f(G) \leq \bn(G)\enspace.$$
\end{lemma}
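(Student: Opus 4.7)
The plan is a standard LP-duality style argument, exploiting the fact that a minimum hitting set for the very bramble witnessing $\had_f(G)$ is automatically a witness that its order is at most $\bn(G)$.

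First I would take an optimal pair $(\BB, w)$ realising $\had_f(G)$, so that $\BB$ is a bramble in $G$, $w : \BB \to \mathbb{R}_{\geq 0}$, the total weight $\sum_{X \in \BB} w(X)$ equals $\had_f(G)$, and for every $v \in V(G)$ we have $\sum_{X \in \BB,\, v \in V(X)} w(X) \leq 1$. Viewed purely as a bramble, $\BB$ has some order (its minimum hitting set size), and this order is at most $\bn(G)$ by the definition of the bramble number. So I choose a hitting set $H$ of $\BB$ with $|H| \leq \bn(G)$.

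Next, the key inequality comes from swapping the order of summation. Since $H$ hits every element of $\BB$, we have $|V(X) \cap H| \geq 1$ for each $X \in \BB$, and therefore
\[
\had_f(G) = \sum_{X \in \BB} w(X) \;\leq\; \sum_{X \in \BB} w(X)\,|V(X) \cap H| \;=\; \sum_{v \in H}\;\sum_{X \in \BB,\, v \in V(X)} w(X) \;\leq\; \sum_{v \in H} 1 \;=\; |H| \;\leq\; \bn(G),
\]
where the penultimate inequality uses the vertex-capacity condition in the definition of $\had_f$. This is the whole argument.

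There is no real obstacle here; the only point to be careful about is that the hitting set must be chosen for the \emph{specific} bramble $\BB$ appearing in the fractional Hadwiger witness (not for some other bramble), so that the bound $\sum_{X \ni v} w(X) \leq 1$ may be applied term-by-term on $H$. The fact that $\bn(G)$ upper-bounds the order of \emph{every} bramble in $G$ is exactly what makes this legitimate.
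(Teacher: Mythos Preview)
Your proof is correct and is essentially identical to the paper's: both take the bramble $\BB$ witnessing $\had_f(G)$, pick a hitting set for it, and use the same swap-of-summation inequality $\sum_X w(X)\leq \sum_X w(X)\,|X\cap H|=\sum_{v\in H}\sum_{X\ni v}w(X)\leq |H|$. The only cosmetic difference is that the paper shows this for an arbitrary hitting set $S$ (so the order of $\BB$ is at least $\had_f(G)$) before invoking $\bn(G)$, whereas you choose $H$ with $|H|\leq\bn(G)$ up front.
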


\begin{proof}
  Let $\BB$ be a bramble in $G$ and let
  $w:\BB\rightarrow\mathbb{R}_{\geq 0}$ be a weight function, such
  that $\had_f(G) =\sum_{X\in\BB}w(X)$ and for each vertex $v$, the
  sum of the weights of the subgraphs in $\BB$ that contain $v$ is at
  most 1. Let $S$ be a hitting set for $\BB$.  Thus
$$|S|=\sum_{v\in S}1
\geq \sum_{v\in S}\sum_{X\in\BB:v\in X}w(X) = \sum_{X\in\BB}|X\cap
S|w(X) \geq \sum_{X\in\BB}w(X) =\had_f(G)\enspace.$$ That is, the
order of $\BB$ is at least $\had_f(G)$.  Hence $\bn(G) \geq
\had_f(G)$.
\end{proof}
Note this lemma is tight; consider $G = K_n$.

\citet{Wood-ProductMinor} proved that 
$\had(G\square K_2)\leq 2\tw(G)+2$ and \citet{ReedWood-EuJC} proved that $\glm(G) \leq 2\tw(G)+2$. More
precisely, Lemma~\ref{lemma:glmhad} and Lemma~\ref{lemma:hadbn} imply that
$$\glm(G) \leq \had(G\square K_2)\leq 2 \had_f(G) \leq 2\bn(G) = 2\tw(G)+2\enspace,$$
and for every integer $r\geq2$,
$$\glm(G) \leq 3 \had_r(G) \leq 3 \had_f(G) \leq 3\bn(G)
=3\tw(G)+3\enspace.$$ Conversely, \citet{ReedWood-EuJC} proved
that $$\tw(G) \leq c\, \glm(G)^4 \sqrt{ \log \glm(G) }\enspace,$$ for
some constant $c$.  Thus $\glm$, $\had(G\square K_2)$, $\had_f$,
$\had_r$ for each $r\geq2$, and $\tw$ are tied by polynomial
functions.

\section{Fractional Open Problems}
Given a graph $G$ define a \emph{$b$-fold colouring} for $G$ to be an assignment of $b$ colours to each vertex of $G$ such that if two vertices are adjacent, they have no colours assigned in common. We can consider this a generalisation of standard graph colouring, which is equivalent when $b=1$. A graph $G$ is \emph{$a\!:\!b$-colourable} when there is a $b$-fold colouring of $G$ with $a$ colours in total. Then define the \emph{$b$-fold chromatic number} $\chi_b(G) := \min\{a| G$ is $a\!:\!b$-colourable$\}$. So $\chi_1(G) = \chi(G)$. Then, define the \emph{fractional chromatic number} $\chi_f(G) = \lim_{b \rightarrow \infty} \frac{\chi_b(G)}{b}$.
See \citet{SU-FGT97} for an overview of the topic. \citet{ReedSeymour-JCTB98} proved that $\chi_f(G) \leq 2\,\had(G)$. Hence there is a relationship between the fractional chromatic number and Hadwiger's number. We have
$$\chi_f(G)\leq \chi(G)\quad\text{and}\quad\had(G)\leq\had_f(G)\leq\tw(G)+1\enspace.$$
 
Hadwiger's Conjecture asserts that $\chi(G)\leq\had(G)$, thus bridging the
gap in the above inequalities. 
Note that $\chi(G)\leq\tw(G)+1$. (Since $G$ has minimum degree
at most $\tw(G)$, a minimum-degree-greedy algorithm uses at most
$\tw(G)+1$ colours.) Thus the following two questions provide 
interesting weakenings of  Hadwiger's Conjecture:

\begin{conjecture}
\label{conjecture:p1}
$\chi(G)\leq \had_f(G)$.
\end{conjecture}

\begin{conjecture}
\label{conjecture:p2}
$\chi_f(G)\leq \had_f(G)$.
\end{conjecture}

Conjecture~\ref{conjecture:p2} was independently introduced in an equivalent form by \citet{Pedersen11}, along with a weaker form of Conjecture~\ref{conjecture:p1}. 

Finally, note that the above results prove that $\had_3$ is bounded by a
polynomial function of $\had_2$. Is $\had_3(G) \leq c\,\had_2(G)$ for
some constant $c$?

\section*{Acknowledgements}
Many thanks to Jacob Fox, Anders Pedersen and Bruce Reed for instructive conversations.

\bibliographystyle{natbib}
\def\soft#1{\leavevmode\setbox0=\hbox{h}\dimen7=\ht0\advance \dimen7
  by-1ex\relax\if t#1\relax\rlap{\raise.6\dimen7
  \hbox{\kern.3ex\char'47}}#1\relax\else\if T#1\relax
  \rlap{\raise.5\dimen7\hbox{\kern1.3ex\char'47}}#1\relax \else\if
  d#1\relax\rlap{\raise.5\dimen7\hbox{\kern.9ex \char'47}}#1\relax\else\if
  D#1\relax\rlap{\raise.5\dimen7 \hbox{\kern1.4ex\char'47}}#1\relax\else\if
  l#1\relax \rlap{\raise.5\dimen7\hbox{\kern.4ex\char'47}}#1\relax \else\if
  L#1\relax\rlap{\raise.5\dimen7\hbox{\kern.7ex
  \char'47}}#1\relax\else\message{accent \string\soft \space #1 not
  defined!}#1\relax\fi\fi\fi\fi\fi\fi}\newcommand{\noopsort}[1]{}


\end{document}